\setlist[itemize]{leftmargin=20pt}
\newcommand{\N}{\ensuremath{\mathbf{N}}}
\newcommand{\Z}{\ensuremath{\mathbf{Z}}}
\newcommand{\Q}{\ensuremath{\mathbf{Q}}}
\newcommand{\R}{\ensuremath{\mathbf{R}}}
\newcommand{\mc}{\mathcal}
\DeclareMathOperator{\ind}{\mathbf{1}}
\DeclareMathOperator*{\esssup}{ess\,sup}
\renewcommand{\emptyset}{\varnothing}
\def\avint_#1{\mathchoice{\mathop{\kern 0.2em\vrule width 0.6em height 0.69678ex depth -0.58065ex \kern -0.8em \intop}\nolimits_{\kern -0.4em#1}}{\mathop{\kern 0.1em\vrule width 0.5em height 0.69678ex depth -0.60387ex \kern -0.6em \intop}\nolimits_{#1}} {\mathop{\kern 0.1em\vrule width 0.5em height 0.69678ex depth -0.60387ex \kern -0.6em \intop}\nolimits_{#1}} {\mathop{\kern 0.1em\vrule width 0.5em height 0.69678ex depth -0.60387ex \kern -0.6em \intop}\nolimits_{#1}}}
\newtheorem{theorem}{Theorem}
\newtheorem{lemma}[theorem]{Lemma}
\newtheorem{proposition}[theorem]{Proposition}
\newtheorem{TheoremLetter}{Theorem}
{}
\newtheorem{CorollaryLetter}[TheoremLetter]{Corollary}
\theoremstyle{remark}
\theoremstyle{definition}
\numberwithin{theorem}{section}
\numberwithin{equation}{section}
\title{Endpoint weak-type bounds beyond Calder\'on-Zygmund theory}
\author{Zoe Nieraeth}
\address{Zoe Nieraeth (she/her), BCAM\textendash  Basque Center for Applied Mathematics, Bilbao, Spain}
\email{zoe.nieraeth@gmail.com}
\thanks{Z. N. is supported by the grant Juan de la Cierva formación 2021 FJC2021-046837-I, the Basque Government through the BERC 2022-2025 program, by the Spanish State Research Agency project PID2020-113156GB-I00/AEI/10.13039/501100011033 and through BCAM Severo Ochoa excellence accreditation SEV-2023-2026.}
\author{Cody B. Stockdale}
\address{Cody B. Stockdale (he/him), School of Mathematical Sciences and Statistics, Clemson University, Clemson, SC 29634, USA}
\email{cbstock@clemson.edu}
\begin{document}

\begin{abstract}
We prove weighted weak-type $(r,r)$ estimates for operators satisfying $(r,s)$ limited-range sparse domination of $\ell^q$-type. Our results contain improvements for operators satisfying limited-range and square function sparse domination. In the case of operators $T$ satisfying standard sparse form domination such as Calder\'on-Zygmund operators, we provide a new and simple proof of the sharp bound 
$$
	\|T\|_{L^1_w(\R^d)\rightarrow L^{1,\infty}_w(\R^d)} \lesssim [w]_1(1+\log [w]_{\text{FW}}).
$$
\end{abstract}

\keywords{Sparse domination, singular integrals, square functions, limited range, Muckenhoupt weights}

\subjclass[2020]{42B20, 42B25}


\maketitle

\section{Introduction}

For a Calder\'on-Zygmund operator $T$ and $w \in A_1$, the sharp bound 
\begin{equation}\label{eq:sharpa1boundintro}
\|Tf\|_{L^{1,\infty}_w(\R^d)}\lesssim [w]_1(1+\log[w]_{\text{FW}})\|f\|_{L^1_w(\R^d)}
\end{equation}
for all $f \in L^1_w(\R^d)$ was first obtained by Lerner, Ombrosi, and P\'erez in \cite{LOP08} with the Fujii-Wilson constant $[w]_{\text{FW}}$ replaced by the larger constant $[w]_1$. This was later improved to \eqref{eq:sharpa1boundintro} by Hyt\"onen and P\'erez in \cite{HP13} and shown to be optimal with respect to $[w]_1$ in general by Lerner, Nazarov, and Ombrosi in \cite{LNO17}.

Following Hyt\"onen's resolution of the $A_2$ conjecture in \cite{Hy12}, Lerner's alternate solution by sparse domination of \cite{Le13a} revolutionized the study of weighted inequalities. In \cite{DLR16}, Domingo-Salazar, Lacey, and Rey extended \eqref{eq:sharpa1boundintro} to operators satisfying pointwise sparse domination, and in \cite{FN19}, Frey and the first author established \eqref{eq:sharpa1boundintro} for the more general class of operators $T$ satisfying sparse form domination: for all $f \in L_0^{\infty}(\R^d)$ and $g \in L^1(\R^d)$ there exists a sparse collection $\mc{S}$ for which
\[
	\int_{\R^d}|Tf||g|\,dx\lesssim \sum_{Q \in \mathcal{S}}\langle f\rangle_{1,Q}\langle g\rangle_{1,Q}|Q|.
\] 
\begin{TheoremLetter}\label{thm:A}
If $T$ satisfies sparse form domination and $w \in A_1$, then
\[
    \|Tf\|_{L^{1,\infty}_w(\R^d)}\lesssim [w]_1(1+\log [w]_{\text{FW}})\|f\|_{L^1_w(\R^d)}
\]
for all $f \in L^1(\R^d)$.
\end{TheoremLetter}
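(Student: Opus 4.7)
The plan is to reduce Theorem~A to a weak-type estimate for a sparse operator via the sparse form hypothesis and duality, and then to extract $[w]_1$ and $1+\log[w]_{\mathrm{FW}}$ using, respectively, the $A_1$ condition and the Hyt\"onen--P\'erez quantitative reverse H\"older inequality.

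\textbf{Duality reduction.} Normalize so $\|f\|_{L^1_w(\R^d)} = 1$ and fix a measurable $E \subseteq \{|Tf| > 1\}$ with $w(E) < \infty$; then $w(E) \leq \int_E |Tf|\, w \, dx$. Applying the sparse form hypothesis to $(f, w\ind_E)$ yields a sparse family $\mc{S}$ for which
\[
w(E) \lesssim \sum_{Q \in \mc{S}} \avg{|f|}_Q \, w(E \cap Q),
\]
so it suffices to bound the right-hand side by $[w]_1(1+\log[w]_{\mathrm{FW}})$.

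\textbf{Corona decomposition and $A_1$.} Construct principal cubes $\mc{F} \subseteq \mc{S}$ by a standard stopping procedure on the weighted averages $m^w_Q(|f|) := w(Q)^{-1}\int_Q |f|\,w$, so that on each corona $\mc{S}_F := \{Q \in \mc{S} : \pi_\mc{F}(Q) = F\}$ one has $m^w_Q(|f|) \lesssim m^w_F(|f|)$. The $A_1$ hypothesis gives the crucial conversion $\avg{|f|}_Q \leq [w]_1\, m^w_Q(|f|) \lesssim [w]_1\, m^w_F(|f|)$, so
\[
\sum_{Q \in \mc{S}_F} \avg{|f|}_Q \, w(E\cap Q) \lesssim [w]_1\, m^w_F(|f|)\, \sum_{Q \in \mc{S}_F} w(E \cap Q).
\]

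\textbf{Reverse H\"older tail estimate.} The heart of the argument is the bound
\[
\sum_{Q \in \mc{S}_F} w(E \cap Q) \lesssim (1+\log[w]_{\mathrm{FW}})\, w(E \cap F).
\]
For this, the quantitative reverse H\"older inequality of Hyt\"onen--P\'erez gives $w \in \RH_{1+\delta}$ with $\delta \asymp 1/[w]_{\mathrm{FW}}$, so that $w(E')/w(Q) \lesssim (|E'|/|Q|)^{\delta/(1+\delta)}$ for $E' \subseteq Q$. Splitting $\mc{S}_F$ by dyadic generation and invoking the sparsity of $\mc{S}$ at each generation, one finds that generation $k$ contributes $\lesssim 2^{-k\delta/(1+\delta)}\, w(E \cap F)$. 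Truncating at the critical depth $k_0 \sim \log[w]_{\mathrm{FW}}$, beyond which the geometric series is summable, produces the logarithmic factor.

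\textbf{Carleson summation over $\mc{F}$.} Combining the two preceding displays and summing over $F \in \mc{F}$,
\[
\sum_{Q \in \mc{S}} \avg{|f|}_Q \, w(E \cap Q) \lesssim [w]_1(1+\log[w]_{\mathrm{FW}}) \sum_{F \in \mc{F}} m^w_F(|f|)\, w(E\cap F).
\]
The stopping construction makes $\mc{F}$ $w$-Carleson, and the final sum is controlled by $\|f\|_{L^1_w} = 1$ via the standard sparse embedding argument, completing the proof.

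\textbf{Principal obstacle.} The delicate ingredient is the reverse H\"older tail estimate: a naive geometric summation would cost $1/\delta \sim [w]_{\mathrm{FW}}$ rather than its logarithm, and only a careful two-regime split at the critical depth $k_0$ yields the sharp bound. The virtue of the paper's $\ell^q$-type sparse domination framework is that this delicate bookkeeping is carried out once at a general level, with Theorem~A falling out as a clean corollary.
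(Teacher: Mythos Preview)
Your central claim, the reverse H\"older tail estimate
\[
\sum_{Q\in\mc{S}_F} w(E\cap Q)\lesssim(1+\log[w]_{\text{FW}})\,w(E\cap F),
\]
is false. Take $w\equiv 1$ (so $[w]_{\text{FW}}=1$), $F=[0,1)$, the $\tfrac34$-sparse chain $\mc{S}_F=\{[0,4^{-j}):0\le j\le K\}$, a constant $f$ so that no stopping occurs and $\mc{S}_F$ is the full corona, and $E=[0,4^{-K})$. Then $w(E\cap F)=4^{-K}$ while $\sum_{Q\in\mc{S}_F}w(E\cap Q)=(K+1)4^{-K}$, so the inequality fails for large $K$. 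Your generation-$k$ bound ``$\lesssim 2^{-k\delta/(1+\delta)}\,w(E\cap F)$'' does not follow from reverse H\"older: what the $A_\infty$ property gives is $\sum_{Q\text{ gen }k}w(E\cap Q)\le w(E\cap F)$ trivially, or $\lesssim 2^{-k\delta/(1+\delta)}w(F)$ via $\RH_{1+\delta}$, but not both at once with $w(E\cap F)$ on the right. Relatedly, the initial duality step is too crude: bounding $w(E)\le\int_E|Tf|w$ and then the sparse form uniformly in $E$ would force $A_{\mc{S}}:L^1_w\to L^1_w$, which is false; and the final ``Carleson summation'' controls $\sum_F m^w_F(|f|)\,w(E\cap F)\lesssim\int_E M^w(|f|)\,w\,dx$, which is not bounded by $\|f\|_{L^1_w}$.

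The paper's proof is structured quite differently, and in fact one of its stated goals is to \emph{avoid} the Hyt\"onen--P\'erez reverse H\"older inequality altogether. The missing idea is a Kolmogorov-type reduction (Lemma~\ref{lem:kolmogorovtao} and Proposition~\ref{lem:formtosparsereduction}): one first passes to $E'=E\cap\{M^{\mc{D}}f\le\gamma\}$ with $\gamma\eqsim[w]_1/w(E)$, which discards a set of small $w$-measure and forces every contributing cube to satisfy $\langle f\rangle_{1,Q}\le\gamma$. Only then does the level-set decomposition $\mc{E}_k=\{Q:\langle f\rangle_{1,Q}\eqsim 2^{-k}\gamma\}$ combine with H\"older at exponent $t>1$, the elementary Carleson bound $\sum_{Q\subseteq Q_0}w(Q)\lesssim[w]_{\text{FW}}w(Q_0)$, and Lemma~\ref{lem:magic} to produce a factor $t'[w]_{\text{FW}}^{1/t'}$, which becomes $1+\log[w]_{\text{FW}}$ upon optimizing $t'$. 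The $E'$ truncation is precisely what makes the endpoint summation close; your corona scheme attempts to bypass it but cannot.
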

\noindent See \cite{Le19, CRr20,R21, LLS21, Ni24} for more recent related work. 

Our first main result is a new and simple proof of Theorem~\ref{thm:A}. Unlike other proofs of this bound, our technique avoids both the Calder\'on-Zygmund decomposition and the sharp reverse H\"older property for weights satisfying the Fujii-Wilson condition of \cite{HP13} -- we instead only rely on elementary dyadic methods.

Theorem~\ref{thm:A} is contained in a more general result of \cite{FN19} for operators satisfying the limited-range sparse domination introduced by Bernicot, Frey, and Petermichl in \cite{BFP16}, which applies to non-integral singular operators outside of Calder\'on-Zygmund theory. Given $0< r<s\leq \infty$, we say that $T$ satisfies $(r,s)$ limited-range sparse domination if for all $f \in L_0^{\infty}(\R^d)$ and $g \in L^1(\R^d)$ there exists a sparse collection $\mc{S}$ for which
\[
	\int_{\R^d}|Tf||g|\,dx\lesssim \sum_{Q \in \mathcal{S}}\langle f\rangle_{r,Q}\langle g\rangle_{s',Q}|Q|.
\] 
Such operators $T$ are bounded on $L^p(\R^d)$ for $p \in (r,s)$; moreover, their boundedness on the weighted Lebesgue space $L^p_w(\R^d)$ for $p \in (r,s)$ is dictated by the condition $w \in A_{p,(r,s)}$:
\[
[w]_{p,(r,s)}:=\sup_Q\langle w\rangle_{\frac{1}{\frac{1}{p}-\frac{1}{s}},Q}\langle w^{-1}\rangle_{\frac{1}{\frac{1}{r}-\frac{1}{p}},Q}<\infty,
\]
and the weak-type bound from $L_w^r(\R^d)$ to $L^{r,\infty}_w(\R^d)$ holds for $w\in A_{r,(r,s)}$. When $r=1$ and $s=\infty$, this recovers the classical Muckenhoupt condition and we simply write 
$$
    A_p:=A_{p,(1,\infty)} \quad\text{and}\quad [w]_p:=[w]_{p,(1,\infty)}.
$$
Note that $w\in A_{p,(r,s)}$ is equivalent to the full-range condition $w_{r,s}\in A_{p_{r,s}}$, where
\[
w_{r,s}:=w^{\frac{1}{\frac{1}{r}-\frac{1}{s}}},\quad p_{r,s} := \frac{\frac{1}{r}-\frac{1}{s}}{\frac{1}{p}-\frac{1}{s}},
\quad\text{and}\quad [w]_{p,(r,s)}=[w_{r,s}]^{\frac{1}{r}-\frac{1}{s}}_{p_{r,s}}.
\]

While \cite[Theorem 1.4]{FN19} indeed recovers Theorem~\ref{thm:A}, it only gives 
\[
    \|T\|_{L^r_w(\R^d)\rightarrow L^{r,\infty}_w(\R^d)}\lesssim[w^r]_1^{\frac{1}{r}}[w^r]_{\text{FW}}^{\frac{1}{r'}}(1+\log[w^r]_{\text{FW}})^{\frac{2}{r}}
\]
for $r>1$ and $T$ satisfying $(r,\infty)$ limited-range sparse domination. Our main result below improves this bound by removing the square in the logarithmic factor: if $r \ge 1$, $T$ satisfies $(r,\infty)$ limited-range sparse domination, and $w^r \in A_1$, then 
\begin{align}\label{eq:limitedrangeimprovement}
	\|T\|_{L^r_w(\R^d)\to L^{r,\infty}_w(\R^d)}\lesssim [w^r]^{\frac{1}{r}}_1[w^r]^{\frac{1}{r'}}_{\text{FW}}(1+\log[w^r]_{\text{FW}})^{\frac{1}{r}}.
\end{align}
We note that the quantitative dependence of \eqref{eq:limitedrangeimprovement} was recently obtained in the case $r=2$ for operators $T$ satisfying the stronger pointwise sparse bound  
$$
    |Tf(x)| \lesssim \sum_{Q \in \mathcal{S}} \langle f\rangle_{2,Q}\ind_Q(x) =:A_{2,\mathcal{S}}f(x)
$$
by Di Plinio, Fl\'orez-Amatriain, Parissis, and Roncal in \cite{DFPR23}. 

We also treat the following even larger class of operators: given $0<r<s\leq \infty$ and $0<q<s$, we say that an operator $T$ satisfies $(r,s)$ limited-range sparse domination of $\ell^q$-type if for all $f \in L_0^{\infty}(\R^d)$ and $g \in L^1(\R^d)$ there exists a sparse collection $\mc{S}$ for which 
\begin{align*}
\Big(\int_{\R^d}\!|Tf|^q|g|\,\mathrm{d}x\Big)^{\frac{1}{q}}\lesssim\Big(\sum_{Q\in\mc{S}}\langle f\rangle^q_{r,Q}\langle g\rangle_{(\frac{s}{q})',Q}|Q|\Big)^{\frac{1}{q}}.
\end{align*}
It was shown by Bailey, Brocchi, and Reguera in \cite{BBR23} that a large collection of non-integral square functions satisfy $(r,s)$ limited-range sparse domination of $\ell^2$-type. While \cite{BBR23} includes sharp strong-type $L^p_w(\R^d)$ bounds for such operators, they did not address the weak-type bound for $p=r$.

We note that $(r,\infty)$ limited-range sparse domination of $\ell^q$-type is implied by the bound
\begin{equation*}\label{eq:pointwisesquareintro}
|Tf(x)|\lesssim \Big(\sum_{Q\in\mc{S}}\langle f\rangle_{r,Q}^q\ind_Q(x)\Big)^{\frac{1}{q}}=:A_{r,\mc{S}}^qf(x).
\end{equation*}
The weak-type inequalities of operators satisfying this stronger property were studied by Lacey and Scurry, and Domingo-Salazar, Lacey, and Rey in \cite{LS12,DLR16} in the case $r=1$ and $q=2$, and by Hyt\"onen and Li in \cite{HL18} for $r=1$ and general $q$. The techniques of \cite{LS12,DLR16,HL18} show that the logarithmic correction in \eqref{eq:limitedrangeimprovement} is no longer needed in the bound from $L^1_w(\R^d)$ to $L^{1,\infty}_w(\R^d)$ for operators satisfying pointwise sparse domination by $A_{1,\mathcal{S}}^q$ when $q > 1$. Our main result extends this phenomenon to operators satisfying $(r,s)$ limited-range sparse domination of $\ell^q$-type for all $0<r<s\leq\infty$ and $r<q<s$.

When $s<\infty$, the result of \cite{FN19} is far from sharp. We believe that the reason for this is that the weak-type space $L^{r,\infty}_w(\R^d)$, which consists of all $f$ such that 
\[
\|f\|_{L^{r,\infty}_w(\R^d)}:=\sup_{\lambda>0}\|\lambda w\ind_{\{|f|>\lambda\}}\|_{L^r(\R^d)} <\infty,
\]
is no longer the natural choice. For general weights $u$ and $v$, Chebyshev's inequality yields
\begin{equation}\label{eq:chebyshevintro}
\|fu\|_{L^{r,\infty}(\R^d,v)}\leq\|fu\|_{L^r(\R^d,v)}=\|f\|_{L^r(\R^d,u^r v)},
\end{equation}
and so one might consider weights other than $u=1$ -- we will choose $u=w_{r,s}^{-\frac{1}{s}}$ and $v=w_{r,s}$. For $s=\infty$, the corresponding space is $L^{r,\infty}_w(\R^d)$, and for $s<\infty$, \eqref{eq:chebyshevintro} becomes
\[
\|fw_{r,s}^{-\frac{1}{s}}\|_{L^{r,\infty}(\R^d,w_{r,s})}\leq\|f\|_{L^r(\R^d,w^r)}=\|f\|_{L^r_w(\R^d)},
\]
which justifies the form of our following main result.
\begin{TheoremLetter}\label{thm:C}
If $0<r<s\leq\infty$, $0<q<s$, $T$ satisfies $(r,s)$ limited-range sparse domination of $\ell^q$-type, and $w\in A_{r,(r,s)}$, then
\[
	\|(Tf)w_{r,s}^{-\frac{1}{s}}\|_{L^{r,\infty}(\R^d,w_{r,s})}\lesssim\begin{cases}
[w]_{r,(r,s)}[w_{r,s}]^{\frac{1}{q}-\frac{1}{r}}_{\text{FW}}(1+\log[w_{r,s}]_{\text{FW}})^{\frac{1}{r}}\|f\|_{L^r_w(\R^d)} & \text{if $q\leq r$}\\
[w]_{r,(r,s)}\|f\|_{L^r_w(\R^d)} & \text{if $q> r$}
\end{cases}
\]
for all $f \in L^p_w(\R^d)$.
\end{TheoremLetter}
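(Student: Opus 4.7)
Set $u := w_{r,s}$, which is an $A_1$-weight with $[u]_1 = [w]_{r,(r,s)}^{1/\sigma}$ for $\sigma := 1/r - 1/s$. Fix $\lambda > 0$ and let $E_\lambda := \{x : |Tf(x)| u(x)^{-1/s} > \lambda\}$. My plan is to establish the $\lambda$-uniform estimate $\lambda u(E_\lambda)^{1/r} \lesssim X \|f\|_{L^r_w}$ with $X$ the target right-hand-side constant from the theorem. On $E_\lambda$, the elementary pointwise inequality $\lambda^q u \le |Tf|^q u^{1-q/s}$ combined with the $\ell^q$-sparse bound paired with $g = \ind_{E_\lambda} u^{1-q/s}$ gives, after observing that $\langle g\rangle_{(s/q)', Q} = (u(Q \cap E_\lambda)/|Q|)^{1-q/s}$,
\[
\lambda^r u(E_\lambda) \;\le\; C\,\lambda^{r-q} \sum_{Q \in \mathcal{S}} \langle f\rangle_{r, Q}^q \Big(\frac{u(Q \cap E_\lambda)}{|Q|}\Big)^{1-q/s} |Q|.
\]
Using sparsity of $\mathcal{S}$, with the majorizing sets $\{E_Q\}$ satisfying $|E_Q| \ge |Q|/2$, and the pointwise bounds $\langle f\rangle_{r,Q}\le M_r f(x)$ and $\langle u\ind_{E_\lambda}\rangle_{1,Q}\le M(u\ind_{E_\lambda})(x)$ valid on $E_Q$, the sum is controlled by the mixed integral $\int_{\R^d} (M_r f)^q \bigl(M(u\ind_{E_\lambda})\bigr)^{1-q/s}\,dx$, reducing the problem to estimating this quantity.

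In the regime $q > r$, where no logarithmic correction is claimed, the $\ell^q$-summability with $q > r$ permits a direct weighted H\"older estimate on this integral: the $A_1$-pointwise bound $M(u\ind_{E_\lambda}) \le [u]_1 u$ together with strong $L^p(w^r)$-boundedness of $M_r$ at exponents $p \in (r, s)$, adapting arguments of \cite{LS12, DLR16, HL18}, close the bound at the sharp constant $[w]_{r,(r,s)}$ without excess factors.

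The critical case $q \le r$ is where the new mechanism must enter, since $M_r$ fails to be strongly bounded on $L^r(w^r)$ at the endpoint. Rather than invoking the sharp reverse H\"older self-improvement of $A_1$-weights as in \cite{HP13, FN19}, I plan to use a Fujii--Wilson stopping time: form a sparse subfamily $\mathcal{G}\subset\mathcal{S}$ by stopping at the maximal descendants $G'$ of $G$ in $\mathcal{S}$ for which $\langle u\rangle_{G'} > 2[u]_{\mathrm{FW}}\langle u\rangle_G$; the identity $\int_G Mu \le [u]_{\mathrm{FW}}u(G)$ guarantees $\sum_{G'}|G'|\le|G|/2$, so $\mathcal{G}$ is sparse. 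Stratifying the cubes $Q \in \mathcal{S}$ by the number $k$ of $\mathcal{G}$-layers between $Q$ and its principal ancestor produces a geometric decomposition on which the factor $(2[u]_{\mathrm{FW}})^k$ appears as a controlled upper bound for $\langle u\rangle_Q/\langle u\rangle_{\pi_\mathcal{G}(Q)}$. Combined with a parallel principal-cube stopping for $|f|^r$ yielding a sparse family $\mathcal{F}$, a Carleson-type embedding against the $\mathcal{F}$-leaves to reconstruct $\|f\|_{L^r_w}^r$, and H\"older's inequality with exponents $r/q$ and $r/(r-q)$ in the dyadic sum, the geometric growth in $k$ yields the factor $[u]_{\mathrm{FW}}^{1/q - 1/r}$ and the summation in $k$ yields the logarithmic correction $(1+\log[u]_{\mathrm{FW}})^{1/r}$.

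The main obstacle will be the careful coupling of the $\mathcal{F}$- and $\mathcal{G}$-stopping times so that the powers of $[u]_{\mathrm{FW}}$ and $(1+\log[u]_{\mathrm{FW}})$ come out exactly as $1/q - 1/r$ and $1/r$. This elementary dyadic accounting substitutes for the reverse H\"older exponent $\epsilon \sim 1/[u]_{\mathrm{FW}}$ of \cite{HP13}, and is the technical heart of the new method announced in the introduction for Theorem~A.
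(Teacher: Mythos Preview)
Your overall route diverges from the paper's in both its reduction step and its core mechanism. The paper does not estimate the level set $E_\lambda=\{|Tf|u^{-1/s}>\lambda\}$ directly; instead it passes through the Kolmogorov-type characterization of $L^{r,\infty}$ (Lemma~\ref{lem:kolmogorovtao} and Proposition~\ref{lem:formtosparsereduction}) to reduce to a uniform bound on finite sparse sums, after excising from a given set $E$ the region where the modified maximal quantity $N^{\mc{D}}f=\sup_{Q}\langle|f|^r\rangle_{1,Q}\langle v\rangle_{1,Q}^{-r/s}\ind_Q$ exceeds a threshold $\gamma$. That excision is exactly what forces the remaining cubes to satisfy $a_Q\le\gamma$, so that the level-set decomposition $\mc{S}_k=\{Q:a_Q\sim 2^{-k}\gamma\}$ starts at a finite top scale. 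Both cases $q>r$ and $q\le r$ then hinge on the disjointness Lemma~\ref{lem:magic} (with $\theta=r/s$), not on a corona decomposition, and for $q\le r$ the logarithm emerges from optimizing a free H\"older parameter $t$ at $t'=2+\log[v]_{\text{FW}}$, not from counting stopping layers.

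Your plan for $q>r$ has a concrete gap. Once you pass from the sparse sum to the integral $\int (M_rf)^q\bigl(M(u\ind_{E_\lambda})\bigr)^{1-q/s}\,dx$ you have thrown away the dyadic structure that the references \cite{LS12,DLR16,HL18} actually exploit; applying $M(u\ind_{E_\lambda})\le[u]_1 u$ then leaves $\int(M_rf)^q u^{1-q/s}\,dx$, and since $q>r$ no strong bound for $M_r$ on $L^p_w$ makes this finite for general $f\in L^r_w$, nor does the factor $u(E_\lambda)^{1-q/r}$ needed to absorb the left-hand side ever reappear. The paper instead keeps the sparse sum over $\mc{S}_+$, splits it into the $\mc{S}_k$, and uses Lemma~\ref{lem:magic} to get pairwise-disjoint pieces $E_Q$ on each $\mc{S}_k$; the geometric factor $2^{-(q/r-1)k}$ then closes the sum. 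For $q\le r$ your double-stopping-time scheme is a genuinely different mechanism and may be workable, but as written it does not explain how the exponent $\tfrac{1}{q}-\tfrac{1}{r}$ on $[u]_{\text{FW}}$ and the single power $\tfrac{1}{r}$ on the logarithm arise: summing a geometric series in the layer index $k$ produces a constant, not a logarithm, so some additional ingredient is still missing from your outline.
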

\noindent We emphasize here that the condition $w\in A_{r,(r,s)}$ is equivalent to the condition 
\[
w_{r,s}=w^{\frac{1}{\frac{1}{r}-\frac{1}{s}}}\in A_1,\quad [w]_{r,(r,s)}=[w_{r,s}]_1^{\frac{1}{r}-\frac{1}{s}}.
\]
The formulation of Theorem~\ref{thm:C} is corroborated by \cite[Theorem~1.11]{Zor19}, which can be used to show that if $T$ satisfies $(r,s)$ limited-range sparse domination of $\ell^q$-type, then 
\[
\|(Tf)w_{p,s}^{-\frac{1}{s}}\|_{L^{p,\infty}(\R^d;w_{p,s})}\lesssim [w]_{p,(r,s)}[w_{p,s}]_{\text{FW}}^{\frac{1}{q}-\frac{1}{p}}\|f\|_{L^p_w(\R^d)}
\]
for all $r<p<s$ and all $f \in L^r_w(\R^d)$. Our result naturally extends this to the endpoint case $p=r$ with a logarithmic correction and provides a new bound for $q>r$.

Setting $s=\infty$, Theorem~\ref{thm:C} yields the following corollary.
\begin{CorollaryLetter}\label{cor:C}
If $r,q>0$, $T$ satisfies $(r,\infty)$ limited-range sparse domination of $\ell^q$-type, and $w^r \in A_1$, then
\[
	\|T\|_{L^r_w(\R^d)\to L^{r,\infty}_w(\R^d)}\lesssim\begin{cases}
[w^r]^{\frac{1}{r}}_1[w^r]^{\frac{1}{q}-\frac{1}{r}}_{\text{FW}}(1+\log[w^r]_{\text{FW}})^{\frac{1}{r}} & \text{if $q\leq r$}\\
[w^r]^{\frac{1}{r}}_1 & \text{if $q> r$}
\end{cases}.
\]
\end{CorollaryLetter}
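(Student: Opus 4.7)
The plan is to deduce Corollary~\ref{cor:C} directly from Theorem~\ref{thm:C} by specializing the parameter $s$ to $\infty$. With the convention $\frac{1}{\infty}=0$, the relevant quantities simplify immediately: $w_{r,s}=w^{1/(1/r-1/s)}$ becomes $w^r$, the factor $w_{r,s}^{-1/s}$ collapses to $1$, the exponent $\frac{1}{r}-\frac{1}{s}$ in $[w]_{r,(r,s)}=[w_{r,s}]_1^{1/r-1/s}$ becomes $\frac{1}{r}$, and $[w_{r,s}]_{\text{FW}}$ becomes $[w^r]_{\text{FW}}$. Consequently, the hypothesis $w\in A_{r,(r,\infty)}$ is equivalent to $w^r\in A_1$ with $[w]_{r,(r,\infty)}=[w^r]_1^{1/r}$, as already noted after the statement of Theorem~\ref{thm:C}.

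Next, I would identify the weak-type norm on the left-hand side of Theorem~\ref{thm:C} with the usual weighted weak $L^r$ norm. Since $w_{r,\infty}^{-1/s}=1$, the expression $\|(Tf)w_{r,s}^{-1/s}\|_{L^{r,\infty}(\R^d,w_{r,s})}$ reduces to $\|Tf\|_{L^{r,\infty}(\R^d,w^r)}$, and this coincides with $\|Tf\|_{L^{r,\infty}_w(\R^d)}$ in view of the defining identity
$$\|f\|_{L^{r,\infty}_w(\R^d)}=\sup_{\lambda>0}\|\lambda w\ind_{\{|f|>\lambda\}}\|_{L^r(\R^d)},$$
whose $r$-th power equals $\sup_{\lambda>0}\lambda^r w^r(\{|f|>\lambda\})$.

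Inserting these identifications into the two branches of the conclusion of Theorem~\ref{thm:C} produces exactly the stated bound in both the case $q\leq r$ and the case $q>r$, so no separate argument is required. There is no genuine obstacle here; the only subtlety is keeping the conventions $\frac{1}{\infty}=0$ and $w_{r,\infty}=w^r$ consistent so that the two parameterizations of the Muckenhoupt condition line up properly.
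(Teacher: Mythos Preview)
Your proposal is correct and is exactly the paper's approach: the paper states the corollary after the sentence ``Setting $s=\infty$, Theorem~\ref{thm:C} yields the following corollary,'' and offers no further proof. Your careful unpacking of the identifications $w_{r,\infty}=w^r$, $w_{r,\infty}^{-1/\infty}=1$, $[w]_{r,(r,\infty)}=[w^r]_1^{1/r}$, and $\|Tf\|_{L^{r,\infty}(\R^d,w^r)}=\|Tf\|_{L^{r,\infty}_w(\R^d)}$ just spells out what the paper leaves implicit.
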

\noindent Note that \eqref{eq:limitedrangeimprovement} is contained in Corollary~\ref{cor:C} in the case $q=1$.

We remark that the main result of \cite{HL18} gives
$$
    \|A_{1,\mathcal{S}}^q\|_{L^p_w(\R^d)\rightarrow L^{p,\infty}_w(\R^d)} \lesssim [w]_p[w^p]_{\text{FW}}^{\frac{1}{q}-\frac{1}{p}} \lesssim [w]_{p}^{\frac{p}{q}}
$$
when $p \in (1,\infty)$ and $q<p$. Since their proof relies on the testing condition of Lacey, Sawyer, and Uriarte-Tuero from \cite{LSU09}, which only holds with $p>1$, this result does not address the $p=1$ endpoint case. It would be interesting to determine if a logarithmic factor is needed to estimate $\|A_{1,\mathcal{S}}^{q}\|_{L^1_w(\R^d)\rightarrow L^{1,\infty}_w(\R^d)}$ in terms of $[w]_1$ for $q<1$, and more generally if such a logarithmic factor is necessary in Corollary~\ref{cor:C} in the case $q<r$. Notice that this question for $A_{2,\mathcal{S}}$ is intimately connected to the open problem described in \cite[Section 1.3]{DFPR23}.

The paper is organized as follows. In Section~\ref{sec:preliminaries}, we collect notation, definitions, and preliminary results. In Section~\ref{sec:proofsketch}, we discuss our strategy and give an elementary proof of Theorem~\ref{thm:A}. In Section~\ref{sec:mainsection}, we generalize the argument for Theorem~\ref{thm:A} to prove Theorem~\ref{thm:C}.


\section{Preliminaries}\label{sec:preliminaries}

Let $d \in \N$. For $A,B>0$, we write $A \lesssim B$ if there exists $C>0$ (which possibly depends on $d$, $r$, $s$, $q$, or $T$) such that $A \leq CB$ and write $A \eqsim B$ if $A \lesssim B \lesssim A$. We additionally use the following notation: 
\begin{itemize}
    \item $L^1_{\text{loc}}(\R^d)$ is the space of locally integrable functions on $\R^d$;
    \item $w$ is a weight if $w(x)>0$ for almost every $x \in \R^d$;
    \item for a weight $w$ and $A\subseteq \R^d$, we write $w(A):=\int_A\! w\,dx$ and write $|A|$ when $w \equiv 1$;
    \item a cube is a set in $\R^d$ of the form $\prod_{j=1}^d[a_j,b_j)$ with $b_j-a_j$ equal for all $j \in \{1,\ldots,d\}$; 
    \item for a collection of cubes $\mc{P}$ and a cube $Q$, we write $\text{ch}_{\mc{P}}(Q)$ to be the collection of maximal cubes in $\mc{P}$ properly contained in $Q$;
    \item for a measurable $f$, $p>0$, and a cube $Q$, we write 
    $$
    \langle f\rangle_{p,Q}:=\left(\frac{1}{|Q|)}\int_Q\!|f|^p\,\mathrm{d
}x\right)^{1/p} \quad\text{and} \quad \langle f\rangle_{\infty,Q}:=\esssup_{x\in Q}|f(x)|;
$$ 

    \item for a weight $w$ and $p >0$, we write
$$
    \|f\|_{L^p_w(\R^d)}:=\|fw\|_{L^p(\R^d)} \quad \text{and}\quad \|f\|_{L^{p,\infty}_w(\R^d)}:=\sup_{\lambda>0}\|\lambda\ind_{\{|f|>\lambda\}}\|_{L^p_w(\R^d)};
$$
    \item for a $\sigma$-finite measure space $(\Omega,\mu)$ and $p>0$, we write
    $$
        \|f\|_{L^{p,\infty}(\Omega,\mu)} := \sup_{\lambda>0}\lambda \mu(\{x \in \Omega: |f(x)|>\lambda\})^{\frac{1}{p}}<\infty;
    $$
    \item $L_{0}^{\infty}(\R^d)$ is the space of essentially bounded functions on $\R^d$ with compact support;
    \item for $p \in [1,\infty]$, the H\"older conjugate $p'$ is defined by $\tfrac{1}{p}+\tfrac{1}{p'}=1$;
    \item for a collection of cubes $\mc{P}$, $r\ge 1$, and $f \in L^1_{\text{loc}}(\R^d)$, we write 
    $$
        M_r^{\mc{P}}f:= \sup_{Q \in \mc{P}}\langle f\rangle_{r,Q}\ind_Q,
    $$
    and we omit $r$ or $\mc{P}$ when $r=1$ or $\mc{P}$ is the collection of all cubes in $\R^d$;
    \item for a collection of cubes $\mc{P}$, $r,s>0$, and $f,g \in L^1_{\text{loc}}(\R^d)$, we write
    $$
        M^{\mc{P}}_{(r,s)}(f,g):= \sup_{Q \in \mc{P}} \langle f \rangle_{r,Q}\langle g\rangle_{s,Q}\ind_Q;
    $$
    \item we write $\sup_Q$ to indicate a supremum taken over all cubes $Q$ in $\R^d$;
    \item for a weight $w$, we write $w \in A_{\text{FW}}$ if 
$$
    [w]_{\text{FW}}:=\sup_{Q}\frac{1}{w(Q)}\int_Q\!M(w\ind_Q)\,\mathrm{d}x<\infty;
$$
\item 
for a collection of cubes $\mc{P}$, $q>0$, and $a=\{a_Q\}_{Q \in \mathcal{P}}\subseteq (0,\infty)$, we write
\[
A^q_{\mc{P}}(a):=\Big(\sum_{Q\in\mc{P}}a_Q^q\ind_Q\Big)^{\frac{1}{q}},
\]
and we omit the index $q$ from the notation when $q=1$;
\item we denote by $\|T\|_{\mathcal{X}\rightarrow \mathcal{Y}}$ the smallest constant $C>0$ such that
$$
    \|Tf\|_{\mathcal{Y}}\leq C\|f\|_{\mathcal{X}}
$$
for all $f \in \mathcal{X}$.
\end{itemize}

We say $\mc{D}$ is a dyadic grid if there exists $\alpha\in\{0,\tfrac{1}{3},\tfrac{2}{3}\}^d$ for which $\mc{D}=\mc{D}^\alpha$, where
\[
\mc{D}^\alpha:=\{2^{-j}\big([0,1)^d+\alpha+k\big):j \in \Z, \,\, k\in\Z^d\}.
\]
We will use the standard ``one-third trick" to reduce to the dyadic setting, see \cite{LN15}.
\begin{lemma}\label{lem:onethirdtrick}
For every cube $Q\subseteq \R^d$, there exists $\alpha\in\{0,\tfrac{1}{3},\tfrac{2}{3}\}^d$ and $\widetilde{Q}\in\mc{D}^\alpha$ such that $Q\subseteq\widetilde{Q}$ and $|\widetilde{Q}|\leq 6^d|Q|$
\end{lemma}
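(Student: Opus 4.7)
The plan is to reduce the statement to a one-dimensional claim and then apply the classical one-third trick. Given $Q \subseteq \R^d$ with side length $\ell$, I would choose $j \in \Z$ with $2^{-j-1} < \ell \leq 2^{-j}$ and aim for a containing cube $\widetilde{Q}$ of side length $L := 2^{-j+1}$. Since $L \leq 4\ell$, any such cube automatically satisfies $|\widetilde{Q}| \leq 4^d|Q| \leq 6^d|Q|$, which is in fact slightly sharper than what is claimed.

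Because the parameter $\alpha \in \{0,\tfrac{1}{3},\tfrac{2}{3}\}^d$ may be selected coordinate by coordinate, and because a Cartesian product of containing intervals is a containing cube, it suffices to argue in one dimension: for an interval $I = [a, a+\ell)$ with $\ell \leq L/2$, I will find $\alpha_* \in \{0,\tfrac{1}{3},\tfrac{2}{3}\}$ and an element of $\mc{D}^{\alpha_*}$ of length $L$ that contains $I$. For each shift $\alpha$, the position $p_\alpha := (a - L\alpha)\bmod L$ of $a$ inside its containing $\mc{D}^\alpha$-interval of length $L$ controls whether $I$ fits, since the inclusion holds iff $p_\alpha \leq L - \ell$. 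The three positions $p_0$, $p_{1/3}$, $p_{2/3}$ are pairwise separated by exactly $L/3$ in the circle $\R/L\Z$, while the forbidden set $\{p : p > L - \ell\}$ has measure $\ell \leq L/2 < 2L/3$. Three points equally $L/3$-spaced on the circle partition it into arcs of length $L/3$, so they cannot all lie in an arc of length strictly less than $2L/3$; consequently at least one $p_{\alpha_*}$ falls in the admissible region, producing the required interval.

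Assembling the coordinate-wise choices yields $\alpha \in \{0,\tfrac{1}{3},\tfrac{2}{3}\}^d$ and $\widetilde{Q} \in \mc{D}^\alpha$ satisfying both required properties. The only non-routine ingredient is the combinatorial observation that three points equally $L/3$-spaced on $\R/L\Z$ cannot all fit into an arc of length below $2L/3$, but this is immediate from the fact that they themselves span a closed arc of length exactly $2L/3$; I do not foresee any substantive obstacle.
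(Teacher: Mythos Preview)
Your argument is correct; in fact, your choice of scale $L$ with $2\ell\leq L<4\ell$ yields the sharper bound $|\widetilde{Q}|<4^d|Q|$, which more than suffices. The paper does not supply its own proof of this lemma but simply cites \cite{LN15} for the standard one-third trick, so there is no in-paper argument to compare against; your write-up is precisely the classical proof.
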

For $\eta \in (0,1)$, we say that a collection of cubes $\mc{S}$ is $\eta$-sparse if $\mc{S}=\bigcup_{\alpha \in \{0,\tfrac{1}{3},\tfrac{2}{3}\}}\mc{S}^{\alpha}$, where $\mc{S}^{\alpha}\subseteq \mc{D}^{\alpha}$ and 
$$
    \sum_{Q' \in \text{ch}_{\mc{S}^{\alpha}}(Q)}|Q'|\leq (1-\eta)|Q|
$$
for all $Q \in \mc{S}^{\alpha}$. We simply say that $\mc{S}$ is sparse when $\eta=\tfrac{1}{2}$. Note that an $\eta$-sparse collection $\mc{S}$ satisfies the following ``almost-disjointness" property: for each $Q \in \mathcal{S}$ there exists $E_Q\subseteq Q$ such that $|E_Q|\geq \eta |Q|$ and $\{E_Q\}_{Q \in \mathcal{S}}$ is a pairwise disjoint collection.

We rely on Kolmogorov's lemma, which in a $\sigma$-finite measure space $(\Omega,\mu)$, states that for all $f \in L^{p,\infty}(\Omega,\mu)$, all $E\subseteq\Omega$ with $0<\mu(E)<\infty$, and all $0<\theta<  p$, we have
\begin{equation}\label{eq:kolmogorov}
\int_E\!|f|^\theta\,\mathrm{d}\mu\leq(\tfrac{p}{\theta})'\|f\|^\theta_{L^{p,\infty}(\Omega,\mu)}\mu(E)^{1-\frac{\theta}{p}}.
\end{equation}
We also use the following variant, which extends \cite[Exercise~1.1.14]{Gr14a} to general $q>0$.
\begin{lemma}\label{lem:kolmogorovtao}
Let $(\Omega,\mu)$ be a $\sigma$-finite measure space and $p,q>0$. Then $f\in L^{p,\infty}(\Omega,\mu)$ if and only if there exists $C>0$ such that for each $E\subseteq\Omega$ with $0<\mu(E)<\infty$ there exists $E'\subseteq E$ with $\mu(E')\geq\tfrac{1}{2}\mu(E)$ and
\[
\int_{E'}\!|f|^q\,\mathrm{d}\mu\leq C\mu(E)^{1-\frac{q}{p}}.
\]
Moreover, the optimal constant $C$ satisfies
\[
C^{\frac{1}{q}} \eqsim \|f\|_{L^{p,\infty}(\Omega,\mu)}.
\]
\end{lemma}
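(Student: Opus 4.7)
The plan is to prove both implications by direct computation using the layer cake representation of the $L^q$-integral together with the weak-type bound $\mu(\{|f|>t\})\leq \|f\|^p_{L^{p,\infty}(\Omega,\mu)}t^{-p}$.

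For the forward direction, assume $f\in L^{p,\infty}(\Omega,\mu)$ and fix $E\subseteq\Omega$ with $0<\mu(E)<\infty$. Set $\lambda:=\bigl(2\|f\|^p_{L^{p,\infty}(\Omega,\mu)}/\mu(E)\bigr)^{1/p}$ and define $E':=E\cap\{|f|\leq\lambda\}$. The weak-type bound gives $\mu(E\setminus E')\leq \tfrac{1}{2}\mu(E)$, so $\mu(E')\geq \tfrac{1}{2}\mu(E)$. By the layer cake formula,
\[
\int_{E'}\!|f|^q\,\mathrm{d}\mu=\int_0^\lambda\! qt^{q-1}\mu(E'\cap\{|f|>t\})\,\mathrm{d}t\leq \int_0^\lambda\! qt^{q-1}\min\!\bigl(\mu(E),\|f\|^p_{L^{p,\infty}(\Omega,\mu)}t^{-p}\bigr)\,\mathrm{d}t.
\]
Splitting the integral at $t_0:=(\|f\|^p_{L^{p,\infty}(\Omega,\mu)}/\mu(E))^{1/p}$ (note $\lambda=2^{1/p}t_0$) and estimating each piece yields $\int_{E'}|f|^q\,\mathrm{d}\mu\lesssim_{p,q}\|f\|^q_{L^{p,\infty}(\Omega,\mu)}\mu(E)^{1-q/p}$, which gives $C^{1/q}\lesssim\|f\|_{L^{p,\infty}(\Omega,\mu)}$.

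For the converse, fix $\lambda>0$ and set $E:=\{|f|>\lambda\}$. Using $\sigma$-finiteness, choose $\Omega_n\uparrow\Omega$ with $\mu(\Omega_n)<\infty$ and write $E_n:=E\cap\Omega_n$, so $\mu(E_n)\uparrow\mu(E)$. Applying the hypothesis to each $E_n$ produces $E_n'\subseteq E_n$ with $\mu(E_n')\geq\tfrac{1}{2}\mu(E_n)$ and $\int_{E_n'}|f|^q\,\mathrm{d}\mu\leq C\mu(E_n)^{1-q/p}$. Since $|f|>\lambda$ on $E_n'\subseteq E$,
\[
\lambda^q\cdot\tfrac{1}{2}\mu(E_n)\leq\lambda^q\mu(E_n')\leq\int_{E_n'}\!|f|^q\,\mathrm{d}\mu\leq C\mu(E_n)^{1-q/p},
\]
which rearranges to $\lambda\mu(E_n)^{1/p}\leq(2C)^{1/q}$. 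Letting $n\to\infty$ and then taking the supremum over $\lambda>0$ gives $\|f\|_{L^{p,\infty}(\Omega,\mu)}\leq(2C)^{1/q}$.

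The main technical obstacle is the forward direction when the three cases $q<p$, $q=p$, and $q>p$ must be handled separately in the layer cake computation: the antiderivative of $t^{q-p-1}$ behaves differently in each regime (with the borderline $q=p$ producing a logarithm $\log(\lambda/t_0)=\log 2^{1/p}$ that is harmless), and for $q>p$ the bound uses $\lambda^{q-p}\eqsim t_0^{q-p}$ to absorb the contribution near the truncation level. Otherwise the argument requires only elementary manipulations.
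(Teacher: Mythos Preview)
Your proof is correct and follows the same outline as the paper: both define $E'=E\cap\{|f|\leq\lambda\}$ with the identical threshold $\lambda=(2\|f\|_{L^{p,\infty}}^p/\mu(E))^{1/p}$, and both handle the converse by testing on the superlevel set $E=\{|f|>\lambda\}$ (you are slightly more careful in truncating by $\Omega_n$ to ensure finite measure, which the paper glosses over). The one notable difference is that your layer cake computation in the forward direction is unnecessary: since $|f|\leq\lambda$ pointwise on $E'$ by construction, the paper simply writes
\[
\int_{E'}|f|^q\,\mathrm{d}\mu\leq\lambda^q\mu(E')\leq 2^{q/p}\|f\|_{L^{p,\infty}}^q\mu(E)^{1-q/p}
\]
in one line, with no splitting at $t_0$ and no case analysis on the sign of $q-p$.
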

\begin{proof}
First suppose that $f\in L^{p,\infty}(\Omega,\mu)$. Given $E\subseteq\Omega$ with $0<\mu(E)<\infty$, define
\[
\gamma:=\Big(\frac{2}{\mu(E)}\Big)^{\frac{1}{p}}\|f\|_{L^{p,\infty}(\Omega,\mu)} \quad\text{and}\quad E':=E\setminus\{x\in\Omega:|f(x)|>\gamma\}.
\]
Then, by definition
\begin{align*}
\mu(E')&\geq\mu(E)-\mu(\{x\in\Omega:|f(x)|>\gamma\})\\
&\geq \mu(E)-\tfrac{1}{\gamma^p}\|f\|^p_{L^{p,\infty}(\Omega,\mu)}\\
&=\tfrac{1}{2}\mu(E),
\end{align*}
and 
\[
\int_{E'}\!|f|^q\,\mathrm{d}\mu\leq\gamma^q\mu(E')\leq2^{\frac{q}{p}}\mu(E)^{1-\frac{q}{p}}\|f\|^q_{L^{p,\infty}(\Omega,\mu)}.
\]

Conversely, let $\lambda>0$ and choose $E=\{x\in\Omega:|f(x)|>\lambda\}$. Then, by assumption, there exists $E'\subseteq E$ with $\mu(E')\geq\tfrac{1}{2}\mu(E)$ and
\[
\lambda^q\mu(E)\leq2\lambda^q\mu(E')\leq2\int_{E'}\!|f|^q\,\mathrm{d}\mu\leq 2C\mu(E)^{1-\frac{q}{p}},
\]
and so
\[
\lambda \mu(E)^{\frac{1}{p}}\leq 2^{\frac{1}{q}}C^{\frac{1}{q}}.
\]
Taking the supremum over all $\lambda>0$ proves the assertion.
\end{proof}

Our arguments crucially rely on the following limited range analogue of the disjointness condition of \cite{LS12}, which is a standard result when $\theta=0$.
\begin{lemma}\label{lem:magic}
If $\mathcal{D}$ is a dyadic grid, $0\leq \theta <1$, $\mathcal{S}\subseteq \mathcal{D}$ satisfies
\begin{equation}\label{eq:magiclemmafinites}
\sum_{Q'\in\text{ch}_{\mc{S}}(Q)}|Q'|\leq\big(\tfrac{1-\theta}{4}\big)^{\frac{1}{1-\theta}}|Q|
\end{equation}
for all $Q \in \mathcal{S}$, $f \in L^1_{\text{loc}}(\R^d)$, $w$ is a weight, and $\lambda>0$, then for each $Q$ in 
\[
    \mathcal{E}:= \{Q\in\mc{S}:\lambda\langle w\rangle_{1,Q}^\theta<\langle f\rangle_{1,Q}\leq 2\lambda\langle w\rangle_{1,Q}^\theta\}
\]
there exists $E_Q\subseteq Q$ such that $\{E_Q\}_{Q \in \mathcal{E}}$ is a pairwise disjoint collection and 
\[
    \int_Q |f|\,dx \leq 2\int_{E_Q}|f|\,dx.
\]
\end{lemma}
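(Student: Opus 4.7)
The plan is to mimic the classical $\theta=0$ argument: I would define $E_Q$ by excising from $Q$ the maximal cubes of $\mc{E}$ strictly contained in $Q$, then balance the sparseness hypothesis against the upper bound $\langle f\rangle_{1,Q'} \leq 2\lambda\langle w\rangle_{1,Q'}^\theta$ coming from membership in $\mc{E}$. The new ingredient for $\theta>0$ will be a Hölder split with exponents $(1/\theta,1/(1-\theta))$, which is exactly what forces the precise sparseness threshold $\bigl(\tfrac{1-\theta}{4}\bigr)^{1/(1-\theta)}$ in \eqref{eq:magiclemmafinites}.

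Concretely, for each $Q\in\mc{E}$, I would let $\mc{E}_Q:=\text{ch}_{\mc{E}}(Q)$ denote the maximal cubes of $\mc{E}$ properly contained in $Q$ and set
\[
    E_Q:=Q\setminus\bigcup_{Q'\in\mc{E}_Q}Q'.
\]
Pairwise disjointness of $\{E_Q\}_{Q\in\mc{E}}$ follows directly from $\mc{D}$ being a single dyadic grid: any two cubes of $\mc{E}$ are either nested or disjoint, and in the nested case the smaller cube lies inside some $Q'\in\mc{E}_Q$ and hence outside $E_Q$. Each $Q'\in\mc{E}_Q$ is contained in some cube of $\text{ch}_{\mc{S}}(Q)$, found by following the dyadic chain in $\mc{S}$ between $Q'$ and $Q$, so hypothesis \eqref{eq:magiclemmafinites} combined with the disjointness of $\mc{E}_Q$ gives
\[
    \sum_{Q'\in\mc{E}_Q}|Q'|\leq\bigl(\tfrac{1-\theta}{4}\bigr)^{\frac{1}{1-\theta}}|Q|\quad\text{and}\quad\sum_{Q'\in\mc{E}_Q}w(Q')\leq w(Q).
\]

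For the integral estimate, I would use $\int_{Q'}|f|\leq 2\lambda w(Q')^\theta|Q'|^{1-\theta}$ for each $Q'\in\mc{E}_Q$ (from the upper membership bound) and apply Hölder with exponents $(1/\theta,1/(1-\theta))$ to the pair $(w(Q')^\theta,|Q'|^{1-\theta})$ to obtain
\[
    \sum_{Q'\in\mc{E}_Q}w(Q')^\theta|Q'|^{1-\theta}\leq\Big(\sum_{Q'}w(Q')\Big)^\theta\Big(\sum_{Q'}|Q'|\Big)^{1-\theta}\leq\tfrac{1-\theta}{4}\,w(Q)^\theta|Q|^{1-\theta}.
\]
Combining with the strict lower bound $\int_Q|f|>\lambda w(Q)^\theta|Q|^{1-\theta}$ from $Q\in\mc{E}$, this would yield $\sum_{Q'}\int_{Q'}|f|\leq\tfrac{1-\theta}{2}\int_Q|f|\leq\tfrac{1}{2}\int_Q|f|$, from which $\int_{E_Q}|f|\geq\tfrac{1}{2}\int_Q|f|$ follows at once. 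The step that most needs care is the Hölder inequality: matching its exponents to $\theta$ is what recombines the individual $w(Q')^\theta$ factors into $w(Q)^\theta$, and the $(1-\theta)$-th power applied to the sparseness bound is precisely what produces the factor $(1-\theta)/4$ that cancels the $2$ from the upper membership bound.
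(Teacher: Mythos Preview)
Your argument is correct, and the overall architecture---define $E_Q$ by removing the $\mc{E}$-children and show that at most half the mass of $f$ is lost---matches the paper's. The one genuine difference is in how you control $\sum_{Q'\in\text{ch}_{\mc{E}}(Q)}\langle w\rangle_{1,Q'}^\theta|Q'|$. You apply H\"older's inequality directly to the sequences $\{w(Q')^\theta\}$ and $\{|Q'|^{1-\theta}\}$ with exponents $(1/\theta,1/(1-\theta))$, using only that the children are disjoint subsets of $Q$. The paper instead observes that $\langle w\rangle_{1,Q'}^\theta\leq (M^{\mc{D}}(w\ind_Q))^\theta$ on $Q'$, integrates over the union of children, and then invokes Kolmogorov's lemma \eqref{eq:kolmogorov} together with the weak-type $(1,1)$ bound for $M^{\mc{D}}$ to reach the same product $w(Q)^\theta\bigl(\sum|Q'|\bigr)^{1-\theta}$, at the cost of an extra factor $\tfrac{1}{1-\theta}$. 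Your route is more elementary (no maximal function or Kolmogorov) and in fact slightly sharper: you obtain $\sum_{Q'}\int_{Q'}|f|\leq\tfrac{1-\theta}{2}\int_Q|f|$ rather than $\tfrac{1}{2}\int_Q|f|$, so the sparseness threshold in \eqref{eq:magiclemmafinites} could be relaxed to $(1/4)^{1/(1-\theta)}$ in your argument. The paper's route, on the other hand, is closer in spirit to the weak-type machinery used elsewhere in the article.
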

\begin{proof}
Define $E_Q:=Q\setminus\bigcup_{Q'\in\text{ch}_{\mc{E}}(Q)}Q'$. Using Kolmogorov's lemma \eqref{eq:kolmogorov}, we have
\begin{align*}
\sum_{Q'\in\text{ch}_{\mc{E}}(Q)}\int_{Q'}\!|f|\,\mathrm{d}x
&\leq 2\lambda\sum_{Q'\in\text{ch}_{\mc{E}}(Q)}\langle w\rangle_{1,Q'}^{\theta}|Q'|\\
&\leq 2\lambda\int_{\bigcup_{Q'\in\text{ch}_{\mc{E}}(Q)}Q'}\!(M^{\mc{D}}(w\ind_Q))^{\theta}\,\mathrm{d}x\\
&\leq 2\lambda \Big(\frac{1}{\theta}\Big)'\|M^{\mc{D}}(w\ind_Q)\|_{L^{1,\infty}(\R^d)}\Big|\bigcup_{Q' \in \text{ch}_{\mc{E}}(Q)}Q'\Big|\\
&\leq \frac{2\lambda}{1-\theta} w(Q)^{\theta}\Big(\sum_{Q'\in\text{ch}_{\mc{E}}(Q)}|Q'|\Big)^{1-\theta}\\
&\leq\frac{1}{2}\int_Q\!|f|\,\mathrm{d}x,
\end{align*}
so that
\begin{align*}
\int_{E_Q}\!|f|\,\mathrm{d}x
&=\int_Q\!|f|\,\mathrm{d}x-\sum_{Q'\in\text{ch}_{\mc{E}}(Q)}\int_{Q'}\!|f|\,\mathrm{d}x
\geq \frac{1}{2}\int_Q\!|f|\,\mathrm{d}x,
\end{align*}
as desired.
\end{proof}

We prove our estimates using the following application of Lemma~\ref{lem:onethirdtrick} and Lemma~\ref{lem:kolmogorovtao}. 
\begin{proposition}\label{lem:formtosparsereduction}
Let $0<r<s\leq\infty$, $0<q<s$, $T$ satisfy $(r,s)$ limited-range sparse domination of $\ell^q$-type, $v:=w^{\frac{1}{\frac{1}{r}-\frac{1}{s}}}$ a weight, and $0<\eta<1$. If there is a constant $C>0$ such that for every dyadic grid $\mc{D}$ and every $E\subseteq\R^d$ with $0<v(E)<\infty$ there exists a subset $E'\subseteq E$ with
$
v(E')\geq (1-\tfrac{1}{2\cdot 3^d})v(E)
$
such that 
\[
\sum_{Q\in\mc{S}}\langle f\rangle_{r,Q}^q\langle v\ind_{E'}\rangle_{(\frac{s}{q})',Q}|Q|\leq C v(E)^{1-\frac{q}{r}}
\]
for all finite $\eta$-sparse collections $\mc{S}\subseteq\mc{D}$ and all $f\in L_{w}^{r}(\R^d)\cap L^\infty_0(\R^d)$ with $\|f\|_{L_w^r(\R^d)}=1$, then $T$ satisfies
\[
\|(Tf)v^{-\frac{1}{s}}\|_{L^{r,\infty}(\R^d;v)}\lesssim 
C^{\frac{1}{q}}\|f\|_{L^r_w(\R^d)}.
\]
for all $f\in L^r_w(\R^d)$.
\end{proposition}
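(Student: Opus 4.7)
The strategy is to use Lemma~\ref{lem:kolmogorovtao} to convert the weak-type bound into a family of level-set integral estimates, each of which can be matched against the sparse form provided by the hypothesis. Setting $d\mu := v\,dx$ and $h := (Tf)v^{-\frac{1}{s}}$ and invoking Lemma~\ref{lem:kolmogorovtao} with $p = r$ and exponent $q$, the desired estimate reduces to showing that for every measurable $E \subseteq \R^d$ with $0 < v(E) < \infty$ there exists $E' \subseteq E$ with $v(E') \geq \tfrac{1}{2} v(E)$ satisfying
\[
\int_{E'} |Tf|^q v^{1-\frac{q}{s}}\,dx \lesssim C v(E)^{1-\frac{q}{r}} \|f\|_{L^r_w(\R^d)}^q.
\]
By homogeneity I will normalize $\|f\|_{L^r_w(\R^d)} = 1$, and by density I may assume $f \in L_0^\infty(\R^d)$.

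Next, I will apply the $(r,s)$ limited-range sparse domination of $\ell^q$-type to the pair $(f,g)$ with $g := v^{1-\frac{q}{s}}\ind_{E'}$ (first truncating $E$ to a bounded set to guarantee $g \in L^1$, then passing to the limit via monotone convergence). This produces a sparse collection $\mathcal{S}$ with
\[
\int_{E'} |Tf|^q v^{1-\frac{q}{s}}\,dx \lesssim \sum_{Q\in\mathcal{S}} \langle f\rangle_{r,Q}^q \langle g\rangle_{(\frac{s}{q})', Q}|Q|.
\]
The key identity here is $\bigl(1-\tfrac{q}{s}\bigr)\bigl(\tfrac{s}{q}\bigr)' = 1$, which yields $g^{(\frac{s}{q})'} = v\ind_{E'}$ and thereby identifies $\langle g\rangle_{(\frac{s}{q})',Q}$ with the quantity $\langle v \ind_{E'}\rangle_{(\frac{s}{q})', Q}$ appearing in the hypothesis.

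To bound the resulting sparse sum, I will exploit the decomposition $\mathcal{S} = \bigcup_\alpha \mathcal{S}^\alpha$ with $\mathcal{S}^\alpha \subseteq \mathcal{D}^\alpha$ for $\alpha \in \{0,\tfrac{1}{3},\tfrac{2}{3}\}^d$ that is already built into the definition of sparse collections. For each $\alpha$, I will apply the hypothesis to the dyadic grid $\mathcal{D}^\alpha$ (and to increasing finite sub-collections $\mathcal{S}^\alpha_n \uparrow \mathcal{S}^\alpha$, extending to all of $\mathcal{S}^\alpha$ by monotone convergence), obtaining a subset $E'_\alpha \subseteq E$ with $v(E'_\alpha) \geq \bigl(1 - \tfrac{1}{2\cdot 3^d}\bigr) v(E)$. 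Setting $E' := \bigcap_\alpha E'_\alpha$, I get
\[
v(E\setminus E') \leq \sum_\alpha v(E \setminus E'_\alpha) \leq 3^d \cdot \tfrac{v(E)}{2\cdot 3^d} = \tfrac{v(E)}{2},
\]
so $v(E') \geq \tfrac{1}{2}v(E)$, while $E' \subseteq E'_\alpha$ and monotonicity of the averages in $E'$ preserve the bound $C v(E)^{1-\frac{q}{r}}$ on each grid's contribution. Summing over the $3^d$ values of $\alpha$ and feeding back into the sparse domination yields the required integral estimate.

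The main obstacle is constructing a single $E'$ that is simultaneously compatible with all $3^d$ dyadic grids while retaining the measure condition $v(E') \geq \tfrac{1}{2}v(E)$; this is precisely why the hypothesis is formulated with the constant $1 - \tfrac{1}{2\cdot 3^d}$ rather than simply $\tfrac{1}{2}$. The remaining technical points -- density from $L_0^\infty$ to $L^r_w$, truncating $E$ to make $g$ integrable, and monotone convergence to pass from finite to general sparse collections -- are routine.
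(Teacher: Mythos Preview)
Your overall strategy coincides with the paper's: reduce via Lemma~\ref{lem:kolmogorovtao}, build $E'=\bigcap_\alpha E'_\alpha$ from the per-grid sets furnished by the hypothesis, apply the $(r,s)$ sparse domination with $g=v^{1-\frac{q}{s}}\ind_{E'}$, and then bound the resulting sparse form grid by grid. The measure computation for $v(E')\geq\tfrac12 v(E)$ and the identity $(1-\tfrac{q}{s})(\tfrac{s}{q})'=1$ are exactly as in the paper.

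There is, however, a genuine gap at the point where you ``apply the hypothesis to \ldots\ $\mc{S}^\alpha_n$''. The sparse collection $\mc{S}$ produced by the domination assumption is, in the paper's convention, only $\tfrac12$-sparse, so each $\mc{S}^\alpha$ is $\tfrac12$-sparse in $\mc{D}^\alpha$. The hypothesis of the proposition, on the other hand, only guarantees the bound for \emph{$\eta$-sparse} subcollections of $\mc{D}$. When $\eta\leq\tfrac12$ your argument goes through unchanged, but the proposition is stated for arbitrary $\eta\in(0,1)$, and in both applications one takes $\eta>\tfrac12$ (namely $\eta=\tfrac34$ in the proof of Theorem~\ref{thm:A} and $\eta=\eta_{r,s}$ with $(1-\eta_{r,s})^{1-\frac{r}{s}}=\tfrac14(1-\tfrac{r}{s})$ in the proof of Theorem~\ref{thm:C}). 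For such $\eta$ a $\tfrac12$-sparse collection need not be $\eta$-sparse, and the hypothesis cannot be invoked on $\mc{S}^\alpha$ directly.

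The paper fills this gap by routing through the bisublinear maximal function: it first bounds
\[
\sum_{Q\in\mc{S}}\langle f\rangle_{r,Q}^q\langle v\ind_{E'}\rangle^{1-\frac{q}{s}}_{1,Q}|Q|
\lesssim\|M^{\mc{S}}_{(\frac{r}{q},(\frac{s}{q})')}(|f|^q,v^{1-\frac{q}{s}}\ind_{E'})\|_{L^1(\R^d)},
\]
uses Lemma~\ref{lem:onethirdtrick} to split the maximal function over the $3^d$ grids, and then appeals to \cite[Lemma~3.2.4, Proposition~3.2.10]{Ni20} to dominate each piece by a sparse form over a genuinely $\eta$-sparse collection $\mc{E}^\alpha\subseteq\mc{D}^\alpha$. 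Only at that stage is the hypothesis applied. This maximal-function detour (converting $\tfrac12$-sparse into $\eta$-sparse at the level of the form) is the one substantive ingredient your outline is missing.
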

\begin{proof}
By Lemma~\ref{lem:kolmogorovtao}, it suffices to show that for every $E\subseteq \R^d$ with $0<v(E)<\infty$ there exists a subset $E'\subseteq E$ such that $v(E')\geq\frac{1}{2}v(E)$ and
\[
\int_{E'}\!|T f|^qv^{1-\frac{q}{s}}\,\mathrm{d}x\lesssim 
Cv(E)^{1-\frac{q}{r}}\|f\|_{L^r_w(\R^d)}^q
\]
for all $f \in L^r_w(\R^d)$. 

Let $E\subseteq \R^d$ with $0<v(E)<\infty$, let $f \in L^r_w(\R^d)$, and assume without loss of generality $f$ is bounded with compact support and $\|f\|_{L^r_{w}(\R^d)}=1$. By hypothesis, for each $\alpha \in \{0,\tfrac{1}{3},\tfrac{2}{3}\}^d$ there exists $E'_\alpha\subseteq E$ such that $v(E'_\alpha)\geq (1-\frac{1}{2\cdot 3^d}) v(E)$ and 
\[
\sum_{Q\in\mc{S}^\alpha}\langle f\rangle_{r,Q}^q\langle v\ind_{E_\alpha'}\rangle_{(\frac{s}{q})',Q}|Q|\leq Cv(E)^{1-\frac{q}{r}}
\]
for all finite $\eta$-sparse collections $\mc{S}^\alpha\subseteq\mc{D}^\alpha$. Set $E':=\bigcap_{\alpha \in \{0,\tfrac{1}{3},\tfrac{2}{3}\}^d}E'_\alpha$. Then 
\begin{align*}
v(E')&=v(E)-v\Big(\bigcup_{\alpha\in\{0,\tfrac{1}{3},\tfrac{2}{3}\}^d}E\setminus E'_\alpha\Big)
\geq v(E)-\sum_{\alpha \in \{0,\tfrac{1}{3},\tfrac{2}{3}\}^d}v(E\setminus E'_\alpha)\\
&\geq v(E)-\frac{v(E)}{2}\sum_{\alpha\in \{0,\tfrac{1}{3},\tfrac{2}{3}\}^d}\frac{1}{3^d}=\frac{1}{2}v(E).
\end{align*}
Since $f\in L^\infty_0(\R^d)$ and (possibly after a truncation argument) $v^{1-\frac{q}{s}}\ind_{E'}\in L^1(\R^d)$, there is a sparse collection $\mc{S}$ such that
\[
\int_{E'}\!|Tf|^qv^{1-\frac{q}{s}}\,\mathrm{d}x \lesssim\sum_{Q\in\mc{S}}\langle f\rangle_{r,Q}^q\langle v\ind_{E'}\rangle^{1-\frac{q}{s}}_{1,Q}|Q|,
\]
and, by monotone convergence, it suffices to estimate the right-hand side for finite $\mc{S}$. Moreover, by Lemma~\ref{lem:onethirdtrick} and \cite[Lemma~3.2.4,\, Proposition~3.2.10]{Ni20}, there are finite $6^{-d}\eta$-sparse collections $\mc{S}^\alpha\subseteq\mc{D}^\alpha$ and $\eta$-sparse collections $\mc{E}^\alpha\subseteq\mc{S}^\alpha$ for which
\begin{align*}
\sum_{Q\in\mc{S}}\langle f\rangle_{r,Q}^q\langle v\ind_{E'}\rangle^{1-\frac{q}{s}}_{1,Q}|Q|&\lesssim
\|M^{\mc{S}}_{(\frac{r}{q},(\frac{s}{q})')}(|f|^q,v^{1-\frac{q}{s}}\ind_{E'})\|_{L^1(\R^d)}\\
&\lesssim 
\sum_{\alpha \in \{0,\tfrac{1}{3},\tfrac{2}{3}\}^d}\|M^{\mc{S}^\alpha}_{(\frac{r}{q},(\frac{s}{q})')}(|f|^q,v^{1-\frac{q}{s}}\ind_{E'})\|_{L^1(\R^d)}\\
&\lesssim 
\sum_{\alpha \in \{0,\tfrac{1}{3},\tfrac{2}{3}\}^d}\sum_{Q\in\mc{E}^\alpha}\langle f\rangle_{r,Q}^q\langle v\ind_{E'}\rangle^{1-\frac{q}{s}}_{1,Q}|Q|.
\end{align*}
We conclude that
\begin{align*}
\sum_{Q\in\mc{S}}\langle f\rangle_{r,Q}^q\langle v\ind_{E'}\rangle^{1-\frac{q}{s}}_{1,Q}|Q|
&\lesssim 
\sum_{\alpha\{0,\tfrac{1}{3},\tfrac{2}{3}\}^d}\sum_{Q\in\mc{E}^\alpha}\langle f\rangle_{r,Q}^q\langle v\ind_{E'}\rangle^{1-\frac{q}{s}}_{1,Q}|Q|\\
&\lesssim 
Cv(E)^{1-\frac{q}{r}}.
\end{align*}
The assertion follows.
\end{proof}

We also use the following extension of the weak-type estimate for the maximal operator.
\begin{lemma}\label{lem:MWeakType}
Let $\mc{D}$ be a dyadic grid, let $0\leq \theta < 1$, and let $w\in A_1$. Then 
\[
N^{\mc{D}}f(x):=\sup_{Q\in\mc{D}}\langle f\rangle_{1,Q}\langle w\rangle_{1,Q}^{-\theta}\ind_Q(x)
\]
satisfies
\[
\|N^{\mc{D}}f\|_{L^{1,\infty}(\R^d,w)}\leq[w]_1^{1-\theta}\|f\|_{L^1(\R^d,w^{1-\theta})}
\]
for all $f\in L^1(\R^d,w^{1-\theta})$.
\end{lemma}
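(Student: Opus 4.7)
My plan is to mimic the classical weak-type $(1,1)$ proof for the dyadic maximal operator on $L^1(w)$ for $w\in A_1$, but carefully keeping track of the extra factor $\langle w\rangle_{1,Q}^{-\theta}$. The key observation is that the defining inequality $\langle f\rangle_{1,Q}\langle w\rangle_{1,Q}^{-\theta}>\lambda$ can be rearranged into an estimate for $\lambda w(Q)$ in terms of $\int_Q |f|\,\mathrm{d}x$ with a factor of $\langle w\rangle_{1,Q}^{1-\theta}$, and then the $A_1$ condition absorbs this factor into the weight $w^{1-\theta}$ on the right-hand side.

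Fix $\lambda>0$ and consider the level set $\{N^{\mc{D}}f>\lambda\}$. I would perform the standard stopping time / selection argument inside $\mc{D}$: pick the collection $\{Q_j\}$ of cubes in $\mc{D}$ that are maximal with respect to the property
\[
\langle f\rangle_{1,Q_j}\langle w\rangle_{1,Q_j}^{-\theta}>\lambda.
\]
Since we are in a dyadic grid, maximality forces the $Q_j$ to be pairwise disjoint, and by construction $\{N^{\mc{D}}f>\lambda\}=\bigcup_j Q_j$. This is the only place where dyadic structure is used and it is completely routine.

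The main (and essentially only) computational step is to rewrite the selection condition as
\[
\lambda w(Q_j)=\lambda\langle w\rangle_{1,Q_j}|Q_j|<\langle w\rangle_{1,Q_j}^{1-\theta}\int_{Q_j}|f|\,\mathrm{d}x,
\]
after which I invoke $w\in A_1$ in the form $\langle w\rangle_{1,Q_j}\leq [w]_1\essinf_{Q_j}w$ to pull the factor $\langle w\rangle_{1,Q_j}^{1-\theta}$ inside the integral as $[w]_1^{1-\theta}w(x)^{1-\theta}$ (using $1-\theta\ge 0$). Summing over the disjoint $Q_j$ then gives
\[
\lambda w(\{N^{\mc{D}}f>\lambda\})\leq [w]_1^{1-\theta}\int_{\R^d}|f|w^{1-\theta}\,\mathrm{d}x,
\]
and taking the supremum over $\lambda>0$ is the claim.

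I do not foresee a real obstacle: the argument is a one-line rearrangement of the stopping inequality, followed by a single application of the $A_1$ condition. The only points one needs to be a bit careful about are that $1-\theta\ge 0$ is needed to turn $\essinf_{Q_j}w$ into an a.e.\ pointwise lower bound for $w^{1-\theta}$, and that the stopping cubes are indeed finite or can be exhausted by a standard truncation/monotone convergence argument so that the sum $\sum_j w(Q_j)$ makes sense.
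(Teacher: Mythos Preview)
Your proposal is correct and follows essentially the same route as the paper: select maximal cubes at level $\lambda$, rearrange the stopping inequality to $\lambda w(Q)\leq\langle w\rangle_{1,Q}^{1-\theta}\int_Q|f|\,\mathrm{d}x$, apply the $A_1$ condition to absorb $\langle w\rangle_{1,Q}^{1-\theta}$ as $[w]_1^{1-\theta}w^{1-\theta}$, and sum over the disjoint cubes. The paper handles the existence of maximal cubes by first truncating to a finite subcollection $\mc{F}\subseteq\mc{D}$ and invoking monotone convergence, which is exactly the caveat you flag at the end.
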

\begin{proof}
Let $\mc{F}\subseteq\mc{D}$ be a finite collection. By monotone convergence, we need only consider $N^{\mc{F}}$ instead of $N^{\mc{D}}$, where $N^{\mc{F}}$ is defined analogously to $N^{\mc{D}}$, but with the supremum taken over $\mc{F}$ instead of $\mc{D}$. Let $\lambda>0$ and let $\mc{P}$ denote the collection of maximal cubes in 
\[
\{Q\in\mc{F}:\langle f\rangle_{1,Q}>\lambda\langle w\rangle_{1,Q}^\theta\}.
\]
Then
\begin{align*}
\lambda w(\{N^{\mc{F}}f>\lambda\})&=\sum_{Q\in\mc{P}}\lambda w(Q)
\leq \sum_{Q\in\mc{P}}\Big(\int_Q\!|f|\,\mathrm{d}x\Big)\langle w\rangle_{1,Q}^{1-\theta}\\
&\leq[w]_1^{1-\theta}\sum_{Q\in\mc{P}}\int_Q\!|f|w^{1-\theta}\,\mathrm{d}x
\leq[w]_1^{1-\theta}\|f\|_{L^1(\R^d,w^{1-\theta})},
\end{align*}
as desired.
\end{proof}


\section{An elementary proof of Theorem~\ref{thm:A}}
\label{sec:proofsketch}
We first outline our strategy when applied to a single sparse operator. Given a dyadic grid $\mc{D}$, a sparse collection $\mc{S}\subseteq\mc{D}$, and $w\in A_1$, the goal is to show that 
\begin{equation}\label{eq:sketchgoal}
\|A_{\mc{S}}f\|_{L^{1,\infty}_w(\R^d)}\lesssim [w]_1(1+\log[w]_{\text{FW}})\|f\|_{L^1_w(\R^d)}
\end{equation}
for all $f\in L^1_w(\R^d)$. We have the following consequence of the good-$\lambda$ inequality of \cite{DFPR23} from \cite[Theorem 4.2]{NSS24}.
\begin{proposition}\label{prop:sketch1}
If $\mc{D}$ is a dyadic grid, $w\in A_{\text{FW}}$, $\eta\in(0,1)$, $\mc{S}\subseteq\mc{D}$ is an $\eta$-sparse collection, $a=\{a_Q\}_{Q\in\mc{D}} \subseteq (0,\infty)$, and $t_1\leq t_2$, then 
\[
\|A^{t_1}_{\mc{S}}(a)\|_{L^{1,\infty}(\R^d,w)}\lesssim\big(\tfrac{1}{\eta}[w]_{\text{FW}}\big)^{\frac{1}{t_1}-\frac{1}{t_2}}\|A^{t_2}_{\mc{S}}(a)\|_{L^{1,\infty}(\R^d,w)}.
\]
\end{proposition}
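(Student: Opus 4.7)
The plan is to feed the proposition into the good-$\lambda$ inequality of \cite{DFPR23} as formulated in \cite[Theorem~4.2]{NSS24}. I expect this inequality to provide a quantitative distributional comparison of the form
\[
w(\{A^{t_1}_{\mc{S}}(a) > 2\lambda,\; A^{t_2}_{\mc{S}}(a) \leq \gamma\lambda\}) \leq \Phi(\gamma)\, w(\{A^{t_1}_{\mc{S}}(a) > \lambda\}),
\]
valid for all $\lambda, \gamma > 0$, where the decay $\Phi(\gamma) \to 0$ as $\gamma \to 0$ is quantitative in $\eta$ and $[w]_{\text{FW}}$ and, crucially, the threshold making $\Phi(\gamma) \leq 1/4$ is of order $(\eta/[w]_{\text{FW}})^{1/t_1 - 1/t_2}$.

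First, I would split $\{A^{t_1}_{\mc{S}}(a) > 2\lambda\}$ according to whether $A^{t_2}_{\mc{S}}(a)$ falls below or above $\gamma\lambda$, and combine with the good-$\lambda$ inequality to obtain
\[
w(\{A^{t_1}_{\mc{S}}(a) > 2\lambda\}) \leq w(\{A^{t_2}_{\mc{S}}(a) > \gamma\lambda\}) + \Phi(\gamma)\, w(\{A^{t_1}_{\mc{S}}(a) > \lambda\}).
\]
Multiplying by $2\lambda$, taking the supremum over $\lambda > 0$, and writing $N_i := \|A^{t_i}_{\mc{S}}(a)\|_{L^{1,\infty}(\R^d, w)}$ then gives $N_1 \leq \tfrac{2}{\gamma} N_2 + 2\Phi(\gamma)\, N_1$. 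Choosing $\gamma$ to be the critical threshold above so that $2\Phi(\gamma) \leq 1/2$ and absorbing the resulting term into the left-hand side should deliver
\[
N_1 \lesssim \tfrac{1}{\gamma}\, N_2 \lesssim \bigl([w]_{\text{FW}}/\eta\bigr)^{1/t_1 - 1/t_2} N_2,
\]
which is the desired inequality.

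The main obstacle I foresee is the standard finiteness caveat for good-$\lambda$ arguments: the absorption step is legitimate only once $N_1 < \infty$ is known a priori. I would handle this by first running the argument on an arbitrary finite sub-collection $\mc{S}_k \subseteq \mc{S}$, for which $A^{t_1}_{\mc{S}_k}(a)$ is a bounded and compactly supported function and hence has trivially finite weak-type norm, establishing the bound with a constant independent of $k$; then passing $\mc{S}_k \uparrow \mc{S}$ via monotone convergence transfers the uniform bound to the full sparse collection $\mc{S}$.
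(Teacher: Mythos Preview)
Your proposal is correct and matches the paper's approach: the paper does not supply its own proof of this proposition but simply cites it as a direct consequence of the good-$\lambda$ inequality of \cite{DFPR23}, via \cite[Theorem~4.2]{NSS24}, which is precisely the route you sketch. Your handling of the absorption/finiteness issue by passing through finite subcollections is the standard and appropriate way to make the argument rigorous.
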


We also have the following result.
\begin{proposition}\label{prop:sketch2}
If $\mc{D}$ is a dyadic grid, $\mc{S}\subseteq\mc{D}$ is sparse, $t>1$, and $w\in A_1$, then 
\[
\|A^t_{\mc{S}}f\|_{L^{1,\infty}_w(\R^d)}\lesssim t'[w]_1\|f\|_{L^1_w(\R^d)}
\]
for all $f\in L^1_w(\R^d)$.
\end{proposition}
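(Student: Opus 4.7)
The plan is to apply Lemma~\ref{lem:kolmogorovtao} with $p=q=1$: it suffices to show that for every $E\subseteq\R^d$ with $0<w(E)<\infty$ there is a subset $E'\subseteq E$ with $w(E')\geq\tfrac{1}{2}w(E)$ and
\[
\int_{E'} A^t_{\mc{S}}f\cdot w\,\mathrm{d}x\lesssim t'[w]_1\|f\|_{L^1_w(\R^d)}.
\]
After normalizing $\|f\|_{L^1_w}=1$, I would take $E':=E\setminus\{x:M^{\mc{D}}f(x)>\Lambda\}$ with $\Lambda$ slightly larger than $2[w]_1/w(E)$; Lemma~\ref{lem:MWeakType} in the case $\theta=0$ (which is the weighted weak-$(1,1)$ bound for $M^{\mc{D}}$) then ensures $w(E')\geq\tfrac{1}{2}w(E)$. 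Crucially, every cube $Q\in\mc{S}$ intersecting $E'$ satisfies $\langle f\rangle_{1,Q}\leq\Lambda$, so the sum defining $A^t_{\mc{S}}f$ is effectively truncated at level $\Lambda$ on $E'$.

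Applying Hölder's inequality with exponents $t$ and $t'$ gives
\[
\int_{E'} A^t_{\mc{S}}f\cdot w\,\mathrm{d}x \leq w(E')^{1/t'}\bigg(\sum_{Q\in\mc{S}}\langle f\rangle^t_{1,Q}\,w(Q\cap E')\bigg)^{1/t}.
\]
I would control the inner sum by decomposing $\mc{S}$ into dyadic level sets $\mc{S}_k:=\{Q\in\mc{S}:2^k<\langle f\rangle_{1,Q}\leq 2^{k+1}\}$, retaining only $k$ with $2^k\lesssim\Lambda$ (the remaining cubes cannot meet $E'$). For each level, Lemma~\ref{lem:magic} applied to a sufficiently sparse sub-collection with $\theta=0$ produces disjoint sets $\{E_Q\}_{Q\in\mc{S}_k}$ with $\int_Q|f|\leq 2\int_{E_Q}|f|$; this disjointness, combined with $w\in A_1$, should yield a level-set packing bound of the form $\sum_{Q\in\mc{S}_k} w(Q\cap E')\lesssim [w]_1\cdot 2^{-k}$. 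Summing the resulting geometric series $\sum_{k\leq\log_2\Lambda} 2^{kt}\cdot 2^{-k}\sim t'\Lambda^{t-1}$ then produces the factor of $t'$ that is characteristic of $\ell^t$-sparse operators.

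The main technical obstacle is to obtain this level-set packing with the \emph{sharp} single power of $[w]_1$: naively combining sparse packing with the weighted weak-type bound for $M^{\mc{D}}$ introduces a spurious factor of $[w]_1$ (yielding $[w]_1^2\cdot 2^{-k}$), which would only give the weaker bound $t'[w]_1^{1+1/t}\|f\|_{L^1_w}$. Overcoming this appears to require either applying Lemma~\ref{lem:magic} to $fw$ in place of $f$ so that the level-set condition is measured against the weighted average $\langle fw\rangle_{1,Q}$ (which relates directly to $\|f\|_{L^1_w}$ via Kolmogorov), or using $\theta>0$ to simultaneously encode the weight into the level-set condition. Once the right packing is in place, the identity $1/t+1/t'=1$ ensures that the $w(E)^{1/t'}$ and $\Lambda^{1/t'}$ factors cancel, yielding the claimed bound $t'[w]_1\|f\|_{L^1_w(\R^d)}$.
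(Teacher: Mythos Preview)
Your approach is essentially the same as the paper's (which defers to \cite{LS12} and is carried out in detail inside the proof of Theorem~\ref{thm:A}): reduce via Lemma~\ref{lem:kolmogorovtao}, remove the set where $M^{\mc{D}}f$ is large, decompose into level sets of $\langle f\rangle_{1,Q}$, apply Lemma~\ref{lem:magic} with $\theta=0$, and sum a geometric series to produce the factor $(1-2^{-(t-1)})^{-1/t}\eqsim t'$.

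The ``main technical obstacle'' you flag is illusory: the packing estimate $\sum_{Q\in\mc{S}_k}w(Q\cap E')\lesssim[w]_1\, 2^{-k}$ (with $\|f\|_{L^1_w}=1$) already holds with a \emph{single} power of $[w]_1$ by the most direct argument. For $Q\in\mc{S}_k$ one has $|Q|\leq 2^{-k}\int_Q|f|\leq 2^{1-k}\int_{E_Q}|f|$ by Lemma~\ref{lem:magic}, hence
\[
w(Q\cap E')\leq w(Q)=\langle w\rangle_{1,Q}\,|Q|\leq 2^{1-k}\langle w\rangle_{1,Q}\int_{E_Q}\!|f|\,\mathrm{d}x\leq 2^{1-k}[w]_1\int_{E_Q}\!|f|w\,\mathrm{d}x,
\]
using only $\langle w\rangle_{1,Q}\leq[w]_1\, w$ a.e.\ on $E_Q\subseteq Q$. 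Summing over the disjoint $E_Q$ gives the bound. No second invocation of the weighted weak-type of $M^{\mc{D}}$, no replacement of $f$ by $fw$, and no $\theta>0$ is needed; this is exactly the step labeled ``Lemma~\ref{lem:magic} with $\theta=0$, and the $A_1$ condition'' in the paper's proof of Theorem~\ref{thm:A}. With this packing in hand your outline closes and yields the claimed constant $t'[w]_1$.
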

\noindent Proposition~\ref{prop:sketch2} follows from the argument of \cite{LS12}, where the quantity $t'$ comes from tracking the constant in the bound obtained from Lemma~\ref{lem:magic}. In particular, splitting the sparse collection into cubes where $\langle f\rangle_{1,Q}\eqsim 2^{-k}$, one finds the bound with constant
\[
\Big(\sum_{k=0}^\infty 2^{-(t-1)k}\Big)^{\frac{1}{t}}=\big(1-2^{-(t-1)}\big)^{-\frac{1}{t}}\eqsim t'.
\]

The estimate \eqref{eq:sketchgoal} follows by combining Proposition~\ref{prop:sketch1} with Proposition~\ref{prop:sketch2} to obtain
\[
\|A_{\mc{S}}f\|_{L^{1,\infty}_w(\R^d)}\lesssim t'[w]_{\text{FW}}^{\frac{1}{t'}}[w]_1\|f\|_{L^1_w(\R^d)},
\]
for arbitrary $t>1$. Setting $t'=2+\log[w]_{\text{FW}}$ yields \eqref{eq:sketchgoal}, since then $[w]_{\text{FW}}^{\frac{1}{t'}}\leq e$. 
We extend this philosophy to operators satisfying sparse form domination and obtain an elementary proof of Theorem~\ref{thm:A}. The full proof is as follows:
\begin{proof}[Proof of Theorem~\ref{thm:A}]
Let $E\subseteq\R^d$ with $0<w(E)<\infty$. By Proposition~\ref{lem:formtosparsereduction}, it suffices to show for each dyadic grid $\mc{D}$ there exists $E'\subseteq E$ such that $w(E')\geq(1-\tfrac{1}{2\cdot 3^d})w(E)$ and 
\[
\sum_{Q\in\mc{S}}\langle f\rangle_{1,Q}\langle w\ind_{E'}\rangle_{1,Q}|Q|\lesssim [w]_1(1+\log[w]_{\text{FW}})
\]
for each finite $\tfrac{3}{4}$-sparse collection $\mc{S}\subseteq\mc{D}$ and all $f\in L^1_w(\R^d)\cap L^{\infty}_0(\R^d)$ with $\|f\|_{L^1_w(\R^d)}=1$.

Given a dyadic grid $\mc{D}$, define
\[
\gamma:=\frac{2\cdot 3^d[w]_1}{w(E)} \quad\text{and}\quad E':=\{x\in E:M^{\mc{D}}f(x)\leq\gamma\}.
\]
Then, by Lemma~\ref{lem:MWeakType} with $\theta=0$,
$$
    w(\{M^{\mc{D}}f>\gamma\}) \leq \frac{\|M^{\mathcal{D}}\|_{L^1_w(\R^d)\rightarrow L^{1,\infty}_w(\R^d)}}{\gamma}\|f\|_{L^1_w(\R^d)}= \frac{w(E)}{2\cdot 3^d},
$$
so that
$$
    w(E')\geq w(E)-w(\{M^{\mc{D}}f>\gamma\})\geq (1-\tfrac{1}{2\cdot 3^d})w(E).
$$

Let $\mc{S}\subseteq\mc{D}$ be a finite $\tfrac{3}{4}$-sparse collection. Note that for any $Q\in\mc{S}$ with $\langle f\rangle_{1,Q}>\gamma$ it holds that $Q\cap E'=\emptyset$, and hence it suffices to sum over
\[
\mc{S}_+:=\{Q\in\mc{S}:\langle f\rangle_{1,Q}\leq\gamma\}.
\]
Given $0<\lambda< 1$, let
\[
\mc{S}_\lambda:=\{Q\in\mc{S}_+: w(E\cap Q)>\lambda w(Q)\}
\]
and let $\mc{S}_{\lambda}^{\ast}$ denote the maximal elements of $\mc{S}_\lambda$. Then
\begin{equation}\label{eq:thm:Bshortr=1}
\begin{split}
\sum_{Q\in\mc{S}_+}\langle f\rangle_{1,Q}\langle w\ind_{E}\rangle_{1,Q}|Q|&= \sum_{Q \in \mc{S}_+}\langle f\rangle_{1,Q}w(Q)\int_0^{\frac{w(E \cap Q)}{w(Q)}}\,d\lambda\\
&=\int_0^1\sum_{Q\in\mc{S}_\lambda}\langle f\rangle_{1,Q}w(Q)\,\mathrm{d}\lambda\\
&=\int_0^1\sum_{Q_0\in\mc{S}^\ast_\lambda}\sum_{\substack{Q\in\mc{S}_\lambda\\Q\subseteq Q_0}}\langle f\rangle_{1,Q}w(Q)\,\mathrm{d}\lambda.
\end{split}
\end{equation}

We next estimate the inner summation. Fix $Q_0 \in S_{\lambda}^{\ast}$ and $0<\lambda <1$. Using H\"older's inequality twice, we have for any $t>1$ that 
\begin{align*}
\sum_{\substack{Q\in\mc{S}_\lambda\\Q\subseteq Q_0}}\langle f\rangle_{1,Q}w(Q)&=\int_{Q_0}\!\Big(\sum_{\substack{Q\in\mc{S}_\lambda\\Q\subseteq Q_0}}\langle f\rangle_{1,Q}\ind_Q\Big)w\,\mathrm{d}x\\
&\leq\int_{Q_0}\!\Big(\sum_{\substack{Q\in\mc{S}_\lambda\\Q\subseteq Q_0}}\langle f\rangle_{1,Q}^t\ind_Q\Big)^{\frac{1}{t}}\Big(\sum_{\substack{Q\in\mc{S}_\lambda\\Q\subseteq Q_0}}\ind_Q\Big)^{\frac{1}{t'}}w\,\mathrm{d}x\\
&\leq\Big(\sum_{\substack{Q\in\mc{S}_\lambda\\Q\subseteq Q_0}}\langle f\rangle_{1,Q}^tw(Q)\Big)^{\frac{1}{t}}\Big(\sum_{\substack{Q\in\mc{S}_\lambda\\Q\subseteq Q_0}} w(Q)\Big)^{\frac{1}{t'}}.
\end{align*}
Using the sparse and Fujii-Wilson conditions, we have 
$$
    \sum_{\substack{Q\in\mc{S}_\lambda\\Q\subseteq Q_0}} w(Q)\lesssim \sum_{\substack{Q \in \mc{S}\\Q \subseteq Q_0}}\int_{E_Q}\!M(w\ind_{Q_0})\,\mathrm{d}x \leq \int_{Q_0}\!M(w\ind_{Q_0})\,dx\leq [w]_{\text{FW}}w(Q_0),
$$
and so the above quantity is controlled by a constant times  
$$
    [w]_{\text{FW}}^{\frac{1}{t'}}w(Q_0)^{\frac{1}{t'}}\Big(\sum_{\substack{Q \in \mc{S}_{\lambda}\\ Q\subseteq Q_0}}\langle f\rangle_{1,Q}^t w(Q)\Big)^{\frac{1}{t}}.
$$

Decompose $\mc{S}_\lambda=\bigcup_{k=0}^\infty\mc{E}_k$, where
\[
\mc{E}_k:=\{Q\in\mc{S}_\lambda:2^{-(k+1)}\gamma<\langle f\rangle_{1,Q}\leq 2^{-k}\gamma\}.
\] Using the definition of $\mc{E}_k$, Lemma~\ref{lem:magic} with $\theta=0$, and the $A_1$ condition, we continue estimating by 
\begin{align*}
&[w]_{\text{FW}}^{\frac{1}{t'}}w(Q_0)^{\frac{1}{t'}}\gamma^{\frac{1}{t'}}\Big(\sum_{k=0}^\infty 2^{-(t-1)k}\sum_{\substack{Q\in\mc{E}_k\\Q\subseteq Q_0}}\langle f\rangle_{1,Q}w(Q)\Big)^{\frac{1}{t}}\\
&\quad\lesssim [w]^{\frac{1}{t'}}_{\text{FW}}w(Q_0)^{\frac{1}{t'}}[w]_1^{\frac{1}{t'}}w(E)^{-\frac{1}{t'}}\Big(\sum_{k=0}^\infty 2^{-(t-1)k}\sum_{\substack{Q\in\mc{E}_k\\Q\subseteq Q_0}}\int_{E_Q}\!|f|Mw\,\mathrm{d}x\Big)^{\frac{1}{t}}\\
&\quad\leq[w]^{\frac{1}{t'}}_{\text{FW}}w(Q_0)^{\frac{1}{t'}}[w]_1w(E)^{-\frac{1}{t'}}\big(1-2^{-(t-1)}\big)^{-\frac{1}{t}}\|f\|^{\frac{1}{t}}_{L^1_{w}(Q_0)}.
\end{align*}

Using $(1-2^{-(t-1)})^{-\frac{1}{t}}\eqsim t'$, H\"older's inequality, and $w(Q_0)\leq \lambda^{-1}w(E\cap Q_0)$, it follows from \eqref{eq:thm:Bshortr=1} that
\begin{align*}
\sum_{Q\in\mc{S}}\langle f\rangle_{1,Q}&\langle w\ind_{E'}\rangle_{1,Q}|Q|\lesssim
t'[w]^{\frac{1}{t'}}_{\text{FW}}[w]_1w(E)^{-\frac{1}{t'}}\int_0^1\!\sum_{Q_0\in\mc{S}^\ast_\lambda}\|f\|_{L^1_{w}(Q_0)}^{\frac{1}{t}}w(Q_0)^{\frac{1}{t'}}\,\mathrm{d}\lambda\\
&\leq
t'[w]^{\frac{1}{t'}}_{\text{FW}}[w]_1w(E)^{-\frac{1}{t'}}\int_0^1\!\Big(\sum_{Q_0\in\mc{S}^\ast_\lambda}\|f\|_{L^1_{w}(Q_0)}\Big)^{\frac{1}{t}}\Big(\sum_{Q_0\in\mc{S}_\lambda^\ast} w(Q_0)\Big)^{\frac{1}{t'}}\,\mathrm{d}\lambda\\
&\leq t'[w]^{\frac{1}{t'}}_{\text{FW}}[w]_1w(E)^{-\frac{1}{t'}}\int_0^1\!\lambda^{-\frac{1}{t'}}\Big(\sum_{Q_0\in\mc{S}_\lambda^\ast} w(E\cap Q_0)\Big)^{\frac{1}{t'}}\,\mathrm{d}\lambda\\
&\leq t t'[w]^{\frac{1}{t'}}_{\text{FW}}[w]_1.
\end{align*}
Setting $t'=2+\log[w]_{\text{FW}}$, we have $t\leq 2$ and $[w]^{\frac{1}{t'}}_{\text{FW}}\leq e$, so the result follows.
\end{proof}

\section{Proof of Theorem~\ref{thm:C}}\label{sec:mainsection}

\begin{proof}[Proof of Theorem~\ref{thm:C}]
Let $v:=w_{r,s}=w^{\frac{1}{\frac{1}{r}-\frac{1}{s}}}$ and let $E\subseteq\R^d$ with $0<v(E)<\infty$. By Proposition~\ref{lem:formtosparsereduction}, it suffices to show that for each dyadic grid $\mc{D}$ there exists $E'\subseteq E$ such that $v(E')\ge (1-\frac{1}{2\cdot 3^d})v(E)$ and 
\[
\sum_{Q\in\mc{S}}\langle f\rangle_{r,Q}^q\langle v\ind_{E'}\rangle^{1-\frac{q}{s}}_{1,Q}|Q|\lesssim v(E)^{1-\frac{q}{r}}\begin{cases} 
[v]_1^{\frac{q}{r}(1-\frac{r}{s})}[v]_{\text{FW}}^{1-\frac{q}{r}}(1+\log[v]_{\text{FW}})^{\frac{q}{r}} & \text{if $q\leq r$}\\
[v]_1^{\frac{q}{r}(1-\frac{r}{s})} & \text{if $q>r$}
\end{cases}
\]
for each finite $\eta_{r,s}$-sparse collection $\mc{S} \subseteq \mc{D}$, where $(1-\eta_{r,s})^{1-\frac{r}{s}}:=\tfrac{1}{4}(1-\frac{r}{s})$, and all $f \in L^r_w(\R^d)\cap L_0^{\infty}(\R^d)$ with $\|f\|_{L^r_w(\R^d)}=1$.

Given a dyadic grid $\mc{D}$, define $a_Q:=\langle |f|^r\rangle_{1,Q}\langle v\rangle_{1,Q}^{-\frac{r}{s}}$ and
\[
N^{\mc{D}}f:=\sup_{Q\in\mc{D}}a_Q\ind_Q.
\]
Setting
\[
\gamma:=\frac{2\cdot 3^d}{v(E)}[v]_1^{1-\frac{r}{s}} \quad\text{and}\quad E':=\{x\in E:N^{\mc{D}}f(x)\leq\gamma\},
\]
it follows from Lemma~\ref{lem:MWeakType} that
\[
v(E')\geq v(E)-v(\{N^{\mc{D}} f>\gamma\})\geq(1-\tfrac{1}{2\cdot 3^d})v(E).
\]
Let $\mc{S}\subseteq\mc{D}$ be a finite $\eta_{r,s}$-sparse collection. Note that for any $Q\in\mc{S}$ with $a_Q>\gamma$ we have $Q\cap E'=\emptyset$, and hence it suffices to sum over
\[
\mc{S}_+:=\{Q\in\mc{S}:\langle |f|^r\rangle_{1,Q}\leq\gamma\langle v\rangle_{1,Q}^{\frac{r}{s}}\}.
\]
We consider the cases $q> r$ and $q\leq r$ separately.

First, assume that $q>r$. Write $\mc{S}_+=\bigcup_{k=0}^\infty\mc{S}_k$, where 
\[
\mc{S}_k:=\{Q\in\mc{S}_+:2^{-(k+1)}\gamma\langle v\rangle_{1,Q}^{\frac{r}{s}}<\langle |f|^r\rangle_{1,Q}\leq 2^{-k}\gamma \langle v\rangle_{1,Q}^{\frac{r}{s}}\}.
\]
By definition of $\mathcal{S}_k$ and Lemma~\ref{lem:magic}, we have
\begin{align*}
\sum_{Q\in\mc{S}_k}\langle f\rangle_{r,Q}^q\langle v\rangle^{1-\frac{q}{s}}_{1,Q}|Q|
&\leq 2^{-(\frac{q}{r}-1)k}\gamma^{\frac{q}{r}-1}\sum_{Q\in\mc{S}_k}\langle |f|^r\rangle_{1,Q}\langle v\rangle_{1,Q}^{1-\frac{r}{s}}|Q|\\
&\lesssim 2^{-(\frac{q}{r}-1)k}\gamma^{\frac{q}{r}-1}\sum_{Q\in\mc{S}_k} \int_{E_Q}\!|f|^r\langle v\rangle_{1,Q}^{1-\frac{r}{s}}\,\mathrm{d}x\\
&\leq 2^{-(\frac{q}{r}-1)k}\gamma^{\frac{q}{r}-1}[v]^{1-\frac{r}{s}}_1\\
&\eqsim 2^{-(\frac{q}{r}-1)k} v(E)^{1-\frac{q}{r}}[v]_1^{\frac{q}{r}(1-\frac{r}{s})}.
\end{align*}
Since $\sum_{k=0}^\infty2^{-(\frac{q}{r}-1)k}\eqsim [(\frac{q}{r})']^{\frac{q}{r}}$, this proves the assertion.

Now assume that $q\leq r$. Given $0<\lambda< 1$, let
\[
\mc{S}_\lambda:=\{Q\in\mc{S}_+: v(E\cap Q)>\lambda^{\frac{1}{1-\frac{q}{s}}} v(Q)\}
\]
and let $\mc{S}_\lambda^\ast$ denote the maximal elements of $\mc{S}_\lambda$. Then
\begin{equation}\label{eq:thm:C1}
\begin{split}
\sum_{Q\in\mc{S}_+}\langle f\rangle^q_{r,Q}\langle v\ind_{E}\rangle^{1-\frac{q}{s}}_{1,Q}|Q|&=\int_0^1\sum_{Q\in\mc{S}_\lambda}\langle f\rangle_{r,Q}^q\langle v\rangle_{1,Q}^{-\frac{q}{s}}v(Q)\,\mathrm{d}\lambda\\
&=\int_0^1\sum_{Q_0\in\mc{S}^\ast_\lambda}\sum_{\substack{Q\in\mc{S}_\lambda\\Q\subseteq Q_0}}a_Q^{\frac{q}{r}}v(Q)\,\mathrm{d}\lambda.
\end{split}
\end{equation}
We next estimate the inner summation. Fix $Q_0 \in \mathcal{S}_{\lambda}^*$ and $0<\lambda<1$. Using H\"older's inequality twice, we have for any $t>1$ that 
\begin{align*}
\sum_{\substack{Q\in\mc{S}_\lambda\\Q\subseteq Q_0}}a_Q^{\frac{q}{r}}v(Q)&=\int_{Q_0}\!\Big(\sum_{\substack{Q\in\mc{S}_\lambda\\Q\subseteq Q_0}}a_Q^{\frac{q}{r}}\ind_Q\Big)v\,\mathrm{d}x\\
&\leq\int_{Q_0}\!\Big(\sum_{\substack{Q\in\mc{S}_\lambda\\Q\subseteq Q_0}}a_Q^t\ind_Q\Big)^{\frac{q}{rt}}\Big(\sum_{\substack{Q\in\mc{S}_\lambda\\Q\subseteq Q_0}}\ind_Q\Big)^{1-\frac{q}{rt}}v\,\mathrm{d}x\\
&\leq\Big(\sum_{\substack{Q\in\mc{S}_\lambda\\Q\subseteq Q_0}}a_Q^tv(Q)\Big)^{\frac{q}{rt}}\Big(\sum_{\substack{Q\in\mc{S}_\lambda\\Q\subseteq Q_0}} v(Q)\Big)^{1-\frac{q}{rt}}\\
&\lesssim [v]_{\text{FW}}^{1-\frac{q}{rt}}v(Q_0)^{1-\frac{q}{rt}}\Big(\sum_{\substack{Q\in\mc{S}_\lambda\\Q\subseteq Q_0}}a_Q^tv(Q)\Big)^{\frac{q}{rt}}.
\end{align*}
Decompose $\mc{S}_\lambda=\bigcup_{k=0}^\infty\mc{E}_k$, where
\[
\mc{E}_k:=\{Q\in\mc{S}_\lambda:2^{-(k+1)}\gamma \langle v\rangle_{1,Q}^{\frac{r}{s}}<\langle |f|^r\rangle_{1,Q}\leq 2^{-k}\gamma \langle v\rangle_{1,Q}^{\frac{r}{s}}\}.
\]
Using the definition of $\mc{E}_k$, Lemma~\ref{lem:magic}, and the $A_1$ condition, we continue estimating by 
\begin{align*}
&[v]_{\text{FW}}^{1-\frac{q}{rt}}\gamma^{\frac{q}{t'}}v(Q_0)^{1-\frac{q}{rt}}\Big(\sum_{k=0}^\infty 2^{-(t-1)k}\sum_{\substack{Q\in\mc{E}_k\\Q\subseteq Q_0}}a_Qv(Q)\Big)^{\frac{q}{rt}}\\
&\quad\lesssim[v]^{1-\frac{q}{rt}}_{\text{FW}}[v]_1^{\frac{q}{rt'}}v(E)^{-\frac{q}{rt'}}\Big(\sum_{k=0}^\infty 2^{-(t-1)k}\sum_{\substack{Q\in\mc{E}_k\\Q\subseteq Q_0}}\int_{E_Q}\!|f|^r\langle v\rangle_{1,Q}^{1-\frac{r}{s}}\,\mathrm{d}x\Big)^{\frac{q}{rt}}v(Q_0)^{1-\frac{q}{rt}}\\
&\quad\leq[v]^{1-\frac{q}{rt}}_{\text{FW}}[v]_1^{\frac{q}{r}(1-\frac{r}{s})}v(E)^{-\frac{q}{rt'}}(1-2^{-(t-1)})^{-\frac{q}{rt}}\|f\|^{\frac{q}{t}}_{L^r_w(Q_0)}v(Q_0)^{1-\frac{q}{rt}}.
\end{align*}
Using $(1-2^{-(t-1)})^{-\frac{1}{t}}\eqsim t'$, H\"older's inequality, and $v(Q_0)\leq\lambda^{-1}v(E\cap Q_0)$, 
it follows from \eqref{eq:thm:C1} that
\begin{align*}
\sum_{Q\in\mc{S}_+}&\langle f\rangle^q_{r,Q}\langle v\ind_{E}\rangle_{1,Q}|Q|\lesssim
(t')^{\frac{q}{r}}[v]^{1-\frac{q}{rt}}_{\text{FW}}[v]^{\frac{q}{r}(1-\frac{r}{s})}_1v(E)^{-\frac{q}{rt'}}\int_0^1\!\sum_{Q_0\in\mc{S}^\ast_\lambda}\|f\|_{L^r_w(Q_0)}^{\frac{q}{t}}v(Q_0)^{1-\frac{q}{rt}}\,\mathrm{d}\lambda\\
&\leq (t')^{\frac{q}{r}}[v]^{1-\frac{q}{rt}}_{\text{FW}}[v]^{\frac{q}{r}(1-\frac{r}{s})}_1v(E)^{-\frac{q}{rt'}}\int_0^1\!\Big(\sum_{Q_0\in\mc{S}^\ast_\lambda}\int_{Q_0}\!|f|^rw^r\,\mathrm{d}x\Big)^{\frac{q}{rt}}\Big(\sum_{Q_0\in\mc{S}^\ast_\lambda}v(Q_0)\Big)^{1-\frac{q}{rt}}\,\mathrm{d}\lambda\\
&\leq (t')^{\frac{q}{r}}[v]^{1-\frac{q}{rt}}_{\text{FW}}[v]^{\frac{q}{r}(1-\frac{r}{s})}_1v(E)^{-\frac{q}{rt'}}\int_0^1\!\lambda^{-(1-\frac{q}{rt})}\Big(\sum_{Q_0\in\mc{S}^\ast_\lambda}v(E\cap Q_0)\Big)^{1-\frac{q}{rt}}\,\mathrm{d}\lambda\\
&\leq \tfrac{r}{q} t (t')^{\frac{q}{r}}[v]^{1-\frac{q}{rt}}_{\text{FW}}[v]^{\frac{q}{r}(1-\frac{r}{s})}_1v(E)^{1-\frac{q}{r}}.
\end{align*}
Setting $t'=2+\log[v]_{\text{FW}}$, we have that $t\leq 2$ and $[v]^{1-\frac{q}{rt}}_{\text{FW}}=[v]^{1-\frac{q}{r}+\frac{q}{rt'}}_{\text{FW}}\leq e[v]^{1-\frac{q}{r}}_{\text{FW}}$, so the result follows.
\end{proof}

\bibliography{bieb}
\bibliographystyle{alpha}
\end{document}